\newtheorem{theorem}{Theorem}[section]
\newtheorem{corollary}[theorem]{Corollary}
\newtheorem{lemma}[theorem]{Lemma}
\newtheorem{proposition}[theorem]{Proposition}
\newenvironment{proof}[1][Proof]{\textbf{#1.} }{\hfill\rule{0.5em}{0.5em}}
{\catcode`\@=11\global\let\AddToReset=\@addtoreset
\AddToReset{equation}{section}

\AddToReset{theorem}{section}

\begin{document}
\title{Gradient weighted norm inequalities for very weak solutions of  linear parabolic equations with BMO coefficients}
\author{
  {\bf Quoc-Hung Nguyen\thanks{ E-mail address: quochung.nguyen@sns.it}}\\[0.5mm]
 {\small   Scuola Normale Superiore, Centro Ennio de Giorgi, Piazza dei Cavalieri 3, I-56100
 	Pisa, Italy.}\\}
\date{May 3, 2017}  
\maketitle
\begin{abstract}
In this paper,  we prove the Lorentz space $L^{q,p}$-estimates for gradients of very weak  solutions to the linear parabolic equations with $\mathbf{A}_q$-weights
$$u_t-\operatorname{div}(A(x,t)\nabla u)=\operatorname{div}(F),$$
 in a bounded domain $\Omega\times (0,T)\subset\mathbb{R}^{N+1}$, where $A$ has a small mean oscillation, and $\Omega$ is a Lipchistz domain with a small Lipschitz constant.\medskip\\
MSC: primary 35K59; secondary 42B37\\\\
Keywords: quasilinear parabolic equations;   maximal potential;  Reifenberg flat domain. 
\end{abstract}                          
 \section{Introduction and main results} 
 In this article, we are concerned with the global weighted Lorentz space estimates for gradients of  very weak solutions to linear parabolic equations in divergence form:
 \begin{equation}\label{5hh070120148}
                       \left\{
                                       \begin{array}
                                       [c]{l}%
                                       {u_{t}}-\operatorname{div}(A(x,t)\nabla u)=\operatorname{
                                       div}(F)~~\text{in }\Omega_T,\\ 
                         u=0~~~~~~~\text{on}~~
                                                                                              \partial_p(\Omega \times (0,T)),
                                                                                                 \\                          
                                       \end{array}
                                       \right.  
                                       \end{equation}   where  $\Omega_T:=\Omega\times (0,T)$ is a bounded open subset of $\mathbb{R}^{N+1}$, $N\geq2$, $ \partial_p(\Omega \times (0,T))=(\partial\Omega\times(0,T))\cup (\Omega\times\{t=0\})$,   $F\in L^p(\Omega_T,\mathbb{R}^N),~p>1$ is a given vector field and  the matrix function $A:\mathbb{R}^N\times\mathbb{R}\times \mathbb{R}^N\to \mathbb{R}^N$ is a Carath\'eodory vector valued function, i.e. $A$ is measurable in $(x,t)$ and continuous with respect to $\nabla u$ for a.e. $(x,t)$.\\
                                       We suppose in this paper that $A$ satisfies 
                                       \begin{align}
                                       \label{5hhconda} \Lambda ^{-1}|\xi|^2\leq \langle A(x,t)\xi,\xi\rangle\le \Lambda |\xi|^2,
                                       \end{align}
                                          for every $\xi\in \mathbb{R}^N$ and a.e. $(x,t)\in \mathbb{R}^N\times \mathbb{R}$, where  $\Lambda$ is a  positive constant.
Our main result is that, for any $q>1$ and any $ w\in \mathbf{A}_q$ (the Muckenhoupt class for parabolic, see below), $F\in L^q_w(\Omega_T,\mathbb{R}^N)$, and under some additional conditions on the matrix $A$ and on the boundary of $\Omega$, there exists a unique very weak solution $u\in L^{q_0}(0,T,W_{0}^{1,q_0}(\Omega))$  for some $q_0>1$ of \eqref{5hh070120148} satisfying 
\begin{align}\label{es1}
\int_{\Omega_T}|\nabla u|^q w dxdt\leq C \int_{\Omega_T}|F|^q w dxdt
\end{align}
In this paper, a very weak solution $u$ of \eqref{5hh070120148} is understood in the standard weak (distributional) sense, that is $u\in L^1(0,T,W_0^{1,1}(\Omega))$ is a very weak solution of \eqref{5hh070120148} if 
\begin{align*}
 -\int_{\Omega_T}u \varphi_tdxdt+\int_{\Omega_T}A(x,t)\nabla u\nabla \varphi dxdt= -\int_{\Omega_T}F\nabla\varphi dxdt
 \end{align*}
 for all $\varphi \in C_c^1([0,T)\times \Omega)$.         
                                                             \\          
Case $w\equiv1$, the result was obtained by Byun and Wang in \cite{55BW1,55BW4}. Moreover, $w\in A_{q/2}$ for  $q\geq 2$ and $w^{3}\in \mathbf{A}_1$ for $1<p<2$ was proved by author in \cite[see Theorem 1.3]{55QH3}.  
The result of this paper is inspired by \cite{AdiMenPhuc}, they have demonstrated for linear elliptic equation, their approach employs a local version of the sharp maximal function of Fefferman and Stein. Our approach in this paper is different from \cite{AdiMenPhuc}, we use Hardy-Littlewood maximal function.
 It is worth mentioning that the result of this paper can imply  results in  \cite{AdiMenPhuc}, see Corollary \ref{101120143b}.  Furthermore, the requirement $w\in \mathbf{A}_q$ in \eqref{es1} is optimal, this was discussed in \cite{AdiMenPhuc}. 

      For our purpose, we need to assume that $\Omega$ is a Lipschitz domain with small Lipschiptz constant. We say that $\Omega$ is a $(\delta,R_0)-$Lip domain for $\delta\in (0,1)$ and $R_0>0$ if for every $x\in\partial \Omega$, there exists a  map $\Gamma:\mathbb{R}^{n-1}\to \mathbb{R}$ such that $||\nabla \Gamma||_{L^\infty(\mathbb{R}^{n-1})}\leq \delta$ and, upon rotating and relabeling of coordinates if necessary, 
      \begin{align*}
      \Omega\cap B_{R_0}(x_0)=\{(x',x_n)\in B_{R_0}(x_0): x_n>\Gamma(x')\}.
      \end{align*}
      It is well-known that  $\Omega$ is a $(\delta,R_0)-$Lip domain for $\delta\in (0,1)$ and $R_0>0$ then, $\Omega$ is also  a $(\delta,R_0)-$Reifenberg flat domain, see \cite{55BW1,55BW4,55QH3}. 
     We also require  that the matrix function $A$  satisfies a smallness condition of BMO type in the $x$-variable in the sense that $A(x,t)$ satisfies a $(\delta,R_0)$-BMO condition for some $\delta, R_0>0$ if 
                                   \begin{equation*}
                                   [A]_{R_0}:=\mathop {\sup }\limits_{(y,s)\in \mathbb{R}^N\times\mathbb{R},0<r\leq R_0}\fint_{Q_r(y,s)}|A(x,t)-\overline{A}_{B_r(y)}(t)|dxdt\leq \delta,
                                   \end{equation*}         
 where 
          $\overline{A}_{B_r(y)}(t)$ is denoted the average of $A(t,.)$ over the ball $B_r(y)$, i.e,
                                   \begin{equation*}
                                   \overline{A}_{B_r(y)}(t):=\fint_{B_r(y)}A(x,t)dx.
                                   \end{equation*}                                                              
                                                                      
                                      The above condition appeared in our previous paper \cite{55QH2}. It is easy to  see that the $(\delta,R_0)-$BMO  is satisfied when $A$ is continuous or has small jump discontinuities with respect to $x$.
    We recall that a positive function $w\in L^1_{\text{loc}}(\mathbb{R}^{N+1})$ is called an $\mathbf{A}_{p}$ weight, $1\leq p<\infty$ if there holds
    \begin{align*}
    [w]_{\mathbf{A}_{p}}:= \mathop {\sup }\limits_{\tilde{Q}_\rho(x,t)\subset\mathbb{
    R}^{N+1}}\left(\fint_{\tilde{Q}_\rho(x,t)}w(y,s)dyds
    \right)\left(\fint_{\tilde{Q}_\rho(x,t)}w(y,s)^{-\frac{1}{p-1}}dyds
        \right)^{p-1}<\infty~~\text{when }~p>1,
    \end{align*} 
     The $[w]_{\mathbf{A}_p}$ is called the $\mathbf{A}_{p}$ constant of $w$. \\                                  
   A positive function $w\in L^1_{\text{loc}}(\mathbb{R}^{N+1})$ is called an $\mathbf{A}_{\infty}$ weight if there are two positive constants $C$ and $\nu$ such that
                                $$w(E)\le C \left(\frac{|E|}{|Q|}\right)^\nu w(Q),
                                $$
                                 for all cylinder $Q=\tilde{Q}_\rho(x,t)$ and all measurable subsets $E$ of $Q$. The pair $(C,\nu) $ is called the $\mathbf{A}_\infty$ constant of $w$ and is denoted by $[w]_{\mathbf{A}_\infty}$.  
  It is well known that this class is the union of $\mathbf{A}_p$ for all $p\in (1,\infty)$, see \cite{55Gra}. Furthermore, if $w\in \mathbf{A}_p$ with $[w]_{\mathbf{A}_p}\leq M$ then there exists a constant $\varepsilon_0=\varepsilon(N,p,M)$, and a constant $M_0=M(N,p,M)$ such that $[w]_{\mathbf{A}_{p-\varepsilon_0}}\leq M_0$. 
   If $w$ is a weight function belonging to $w\in \mathbf{A}_{\infty}$ and $E\subset \mathbb{R}^{N+1}$ a Borel set, $0<q<\infty$, $0<p\leq\infty$, the weighted Lorentz space $L^{q,p}_w(E)$  is the set of measurable functions $g$ on $E$ such that 
        \begin{equation*}
     ||g||_{L^{q,p}_w(E)}:=\left\{ \begin{array}{l}
              \left(q\int_{0}^{\infty}\left(\rho^qw\left(\{(x,t)\in E:|g(x,t)|>\rho\}\right)\right)^{\frac{p}{q}}\frac{d\rho}{\rho}\right)^{1/p}<\infty~\text{ if }~s<\infty, \\ 
               \sup_{\rho>0}\rho \left( w\left(\{(x,t)\in E:|g(x,t)|>\rho\}\right)\right)^{1/q}<\infty~~\text{ if }~p=\infty. \\ 
               \end{array} \right.
        \end{equation*}              
               Here we write $w(O)=\int_{O}w(x,t)dxdt$ for a measurable set $O\subset \mathbb{R}^{N+1}$.         Throughout the paper, we always denote  $T_0=\text{diam}(\Omega)+T^{1/2}$ and $Q_\rho(x,t)=B_\rho(x)\times (t-\rho^2,t)$ $\tilde{Q}_\rho(x,t)=B_\rho(x)\times (t-\rho^2/2,t+\rho^2/2)$ for $(x,t)\in\mathbb{R}^{N+1}$ and $\rho>0$.  Moreover,  $\mathcal{M}$ denotes the parabolic Hardy-Littlewood maximal function defined for each locally integrable function  $f$ in $\mathbb{R}^{N+1}$ by
                     \begin{equation*}
                     \mathcal{M}(f)(x,t)=\sup_{\rho>0}\fint_{\tilde{Q}_\rho(x,t)}|f(y,s)|dyds~~\forall (x,t)\in\mathbb{R}^{N+1}.
                     \end{equation*}
            If $q>1$ and $w\in \mathbf{A}_q$ we verify that $\mathcal{M}$ is operator from $L^1(\mathbb{R}^{N+1})$ into $L^{1,\infty}(\mathbb{R}^{N+1})$ and $L^{q,p}_w(\mathbb{R}^{N+1})$  into itself for $0<p\leq \infty$, see  \cite{55Stein2,55Stein3,55Tur}. 
         \\ We would like to mention that the use
            of the Hardy-Littlewood maximal function in non-linear degenerate problems was started in the elliptic setting by T.  Iwaniec in his fundamental paper \cite{Iwa}. \\
                        We now state the main result of the paper.                                      
                \begin{theorem} \label{101120143} For any $w\in \mathbf{A}_{q}$, $1< q<\infty$, $0<p\leq\infty$ we find  $\delta=\delta(N,\Lambda, q,p, [w]_{\mathbf{A}_{q}})\in (0,1)$ such that if $\Omega$ is  $(\delta,R_0)$-Lip domain $\Omega$ and $[A]_{R_0}\le \delta$ for some $R_0>0$ and $F\in L^{q,p}_w(\Omega_T)$, then  there exists a unique weak solution $u\in  L^{q_0}(0,T,W_{0}^{1,q_0}(\Omega))$ for some $q_0>1$ of
                	  \eqref{5hh070120148} satisfying                                    
                  \begin{equation}\label{101120144}
                              |||\nabla u|||_{L^{q,p}_w(\Omega_T)}\leq C ||F||_{L^{q,p}_w(\Omega_T)}.
                                       \end{equation} 
                                        Here $C$ depends only  on $N,\Lambda,q,p, [w]_{\mathbf{A}_q}$ and $T_0/R_0$.                       
                                      \end{theorem}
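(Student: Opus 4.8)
The plan is to establish \eqref{101120144} through a three-stage scheme: (i) an interior and boundary comparison estimate that transfers the BMO-smallness of $A$ and the flatness of $\partial\Omega$ into a good-$\lambda$ type bound for the level sets of $\mathcal M(|\nabla u|^2)$; (ii) a Calder\'on--Zygmund covering/stopping-time argument on parabolic cylinders that yields a power-decay estimate for these level sets in terms of $\mathcal M(|F|^2)$; (iii) a ``weighted transference'' step, where the $\mathbf A_q$ property of $w$ together with the boundedness of $\mathcal M$ on $L^{q/2,p/2}_w$ (recorded in the excerpt) and the reverse-H\"older self-improvement $[w]_{\mathbf A_{q-\varepsilon_0}}\le M_0$ upgrade the unweighted decay into the weighted Lorentz bound. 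The starting point is that, since $q>1$ and $w\in\mathbf A_q$, the exponent $q_0$ for which $u\in L^{q_0}(0,T;W^{1,q_0}_0(\Omega))$ can be taken strictly below $2$ but close enough that the comparison machinery of the author's earlier papers \cite{55QH2,55QH3} applies; one works throughout with the auxiliary function $\mathbf M:=[\mathcal M(|\nabla u|^2)]^{1/2}$ and its analogue for $F$.

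The heart of the argument is step (i)--(ii). After rescaling so that $R_0$ is comparable to $T_0$, one fixes a cylinder $\tilde Q_\rho$ and compares $u$ on $\tilde Q_{2\rho}$ with the solution $v$ of the homogeneous problem $v_t-\operatorname{div}(\overline A_{B_{2\rho}}(t)\nabla v)=0$ with the same boundary data (interior version), respectively with the solution of the corresponding problem on a half-cylinder after flattening $\partial\Omega$ (boundary version). The BMO hypothesis $[A]_{R_0}\le\delta$ forces $\fint_{\tilde Q_{2\rho}}|\nabla u-\nabla v|^2$ to be controlled by $\delta$ times $\fint_{\tilde Q_{2\rho}}|\nabla u|^2$ plus $\fint_{\tilde Q_{2\rho}}|F|^2$, while interior regularity for $v$ (constant-in-$x$ coefficients) gives the Lipschitz-type bound $\|\nabla v\|_{L^\infty(\tilde Q_\rho)}^2\lesssim\fint_{\tilde Q_{2\rho}}|\nabla v|^2$. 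Combining these produces the key geometric claim: there exist $N_0>1$ and, for each small $\varepsilon>0$, a $\delta=\delta(\varepsilon)$ such that if a cylinder $\tilde Q$ satisfies $\tilde Q\cap\{\mathbf M^2>N_0\lambda\}\ne\emptyset$ and $\tilde Q\cap\{\mathcal M(|F|^2)\le\delta^2\lambda\}\ne\emptyset$, then $|\tilde Q\cap\{\mathbf M^2>N_0\lambda\}|\le\varepsilon|\tilde Q|$. Feeding this into the Vitali-type covering lemma for parabolic cylinders (adapted to the $(\delta,R_0)$-Reifenberg geometry of $\Omega$, exactly as in \cite{55BW1,55QH3}) yields
\begin{equation*}
\big|\{\mathbf M^2>N_0\lambda\}\big|\le C\varepsilon\,\big|\{\mathbf M^2>\lambda\}\big|+\big|\{\mathcal M(|F|^2)>\delta^2\lambda\}\big|,
\end{equation*}
valid for all $\lambda>0$, which is the unweighted good-$\lambda$ inequality.

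For step (iii) one replaces Lebesgue measure by $w$: because $w\in\mathbf A_q\subset\mathbf A_\infty$, the bound $|E|\le\varepsilon|\tilde Q|$ on a cylinder improves to $w(E)\le C\varepsilon^\nu w(\tilde Q)$, so the same covering argument gives $w(\{\mathbf M^2>N_0\lambda\})\le C\varepsilon^\nu w(\{\mathbf M^2>\lambda\})+w(\{\mathcal M(|F|^2)>\delta^2\lambda\})$. Choosing $\varepsilon$ (hence $\delta$) so that $C N_0^{q/2}\varepsilon^\nu<\tfrac12$, one multiplies by $\lambda^{p/2}$, sums the geometric series over the dyadic scale $\lambda=N_0^k\lambda_0$ (or integrates $d\lambda/\lambda$ when $p<\infty$, with the obvious sup-modification when $p=\infty$), and arrives at $\|\mathbf M\|_{L^{q,p}_w}\lesssim\|[\mathcal M(|F|^2)]^{1/2}\|_{L^{q,p}_w}$. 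Since $w\in\mathbf A_q=\mathbf A_{q/2\cdot 2}$ and $p/2$ stays positive, $\mathcal M$ is bounded on $L^{q/2,p/2}_w$, so the right side is $\lesssim\||F|^2\|_{L^{q/2,p/2}_w}^{1/2}=\|F\|_{L^{q,p}_w}$; and $|\nabla u|\le\mathbf M$ pointwise finishes \eqref{101120144}. Existence and uniqueness of the very weak solution in $L^{q_0}(0,T;W^{1,q_0}_0(\Omega))$ follow by the now-standard approximation/stability scheme (truncate $F$, solve in $L^2$, pass to the limit using the a priori estimate just proved), as in \cite{55QH3,AdiMenPhuc}.

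I expect the main obstacle to be the boundary comparison estimate in the \emph{very weak} range $q_0<2$: one cannot simply test the equation with $u$ itself, so the difference $u-v$ must be estimated by a duality/self-improvement argument (a parabolic analogue of Iwaniec's device, or a Gehring-type reverse H\"older lemma up to the boundary) to gain the extra integrability needed before the $L^2$-based comparison can even be written down. Making the dependence $\delta=\delta(N,\Lambda,q,p,[w]_{\mathbf A_q})$ honest also requires care: the choice of $\varepsilon$ above depends on $\nu$ and $C$ from the $\mathbf A_\infty$ condition, which in turn depend only on $[w]_{\mathbf A_q}$ via the quantitative $\mathbf A_\infty$ bounds, so one must track these constants through the reverse-H\"older/self-improvement step $[w]_{\mathbf A_{q-\varepsilon_0}}\le M_0$ rather than invoke it qualitatively.
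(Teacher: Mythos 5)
Your architecture (comparison estimates transferring the BMO/flatness smallness, a good-$\lambda$ inequality via a Vitali-type covering of parabolic cylinders, then summation in the weighted Lorentz scale) is exactly the paper's, and your closing remarks correctly identify the two delicate points. However, there is a genuine gap in your step (iii), and it is precisely the point where this theorem goes beyond the author's earlier work. You run the whole scheme with $\mathbf M=[\mathcal M(|\nabla u|^2)]^{1/2}$ and close the estimate by asserting that ``since $w\in\mathbf A_q=\mathbf A_{q/2\cdot 2}$\,, $\mathcal M$ is bounded on $L^{q/2,p/2}_w$.'' This is false: boundedness of $\mathcal M$ on $L^{q/2}_w$ requires $w\in\mathbf A_{q/2}$, which is a strictly smaller class than $\mathbf A_q$, and for $1<q\le 2$ the exponent $q/2\le 1$ rules it out entirely. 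With the exponent $2$ inside the maximal function your argument only recovers the hypotheses $w\in\mathbf A_{q/2}$, $q>2$ of the earlier paper \cite{55QH3}, not the optimal $\mathbf A_q$ condition claimed here.

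The paper's resolution is to make the inner exponent a free small parameter $s$ rather than $2$: by openness of the Muckenhoupt classes one picks $q_2=q_2(N,q,[w]_{\mathbf A_q})\in(1,q)$ with $[w]_{\mathbf A_{q_2}}\le C_0$, sets $s=q/q_2$, and proves the good-$\lambda$ inequality for $\mathcal M(|\nabla u|^s)$ versus $\mathcal M(|F|^s)$ (Theorem \ref{5hh23101312}). Then $\mathcal M$ \emph{is} bounded on $L^{q_2,pq_2}_w$, so $\|\mathcal M(|F|^{q/q_2})\|_{L^{q_2,pq_2}_w}\le C\|F\|_{L^{q,p}_w}^{q/q_2}$, and $|\nabla u|^{q/q_2}\le\mathcal M(|\nabla u|^{q/q_2})$ finishes. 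The price is that all the interior and boundary comparison estimates (Propositions \ref{5hh24092} and \ref{5hh16101310}) must be established in $L^s$ for $s$ possibly well below $2$; this is supplied by the $L^s$ well-posedness of Theorem \ref{161120141} applied on the local cylinders (to split off the inhomogeneous part $W$ with $\|\nabla W\|_{L^s}\lesssim\|F\|_{L^s}$) together with the reverse-H\"older/Gehring lemmas \ref{111120147} and \ref{141120141}, which let one pass from $L^1$ or $L^s$ averages of $\nabla w$ up to the $L^2$ level where the freezing-of-coefficients comparison is performed. So the duality/self-improvement device you anticipate at the end is indeed needed, but it must be combined with the small-exponent choice $s=q/q_2$; without that choice the weighted transference step cannot be closed for general $w\in\mathbf A_q$.
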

 As an immediate consequence of Theorem \ref{101120143}, we obtain a version of  Theorem \ref{101120143} for the linear elliptic equations. This result was  obtained in \cite{AdiMenPhuc}.                                                                   
 \begin{corollary}\label{101120143b}  Assume  that $ A(x)=A(x,t)$ for all $(x,t)\in \mathbb{R}^{N+1}$.   For any $w(x)=w(x,t)\in  \mathbf{A}_{q}$, $1< q<\infty$, $0<p\leq\infty$ we find  $\delta=\delta(N,\Lambda, q_0,q,p, [w]_{\mathbf{A}_{q}})\in (0,1)$ such that if $\Omega$ is  $(\delta,R_0)$-Lip domain $\Omega$ and $[A]_{R_0}\le \delta$ for some $R_0>0$ and $G\in L^{q,p}_w(\Omega)$ then there exists a unique very weak solution $u\in W_{0}^{1,q_0}(\Omega)$ for some $q_0>1$ of 
 	\begin{equation}\label{pro-elliptic}
 	\left\{
 	\begin{array}
 	[c]{l}%
 	-\operatorname{div}(A(x)\nabla u)=\operatorname{
 		div}(G)~~\text{in }\Omega,\\ 
 	u=0~~~~~~~\text{on}~~
 	\partial \Omega.
 	\\                          
 	\end{array}
 	\right.  
 	\end{equation}  satisfying                                       
 	\begin{equation}\label{101120144b}
 	|||\nabla u|||_{L^{q,p}_w(\Omega)}\leq C ||F||_{L^{q,p}_w(\Omega)}.
 	\end{equation} 
 	Here $C$ depends only  on $N,\Lambda,q,p, [w]_{\mathbf{A}_q}$ and $diam(\Omega)/R_0$.
 \end{corollary}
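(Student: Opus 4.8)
The plan is to derive the elliptic bound \eqref{101120144b} from the parabolic Theorem~\ref{101120143} by a lifting argument: interpret the stationary profile $u$ as a time-truncation of the solution of a parabolic problem with $t$-independent data on a cylinder $\Omega\times(0,T)$, apply Theorem~\ref{101120143} there, and descend using that a $t$-independent function $g$ satisfies $\|g\|_{L^{q,p}_w(\Omega\times(0,T))}=T^{1/q}\|g\|_{L^{q,p}_w(\Omega)}$. First I would record the elementary facts that make Theorem~\ref{101120143} applicable with data independent of $t$: since $A=A(x)$, the average $\overline A_{B_r(y)}(t)$ does not depend on $t$, so $[A]_{R_0}$ equals the corresponding elliptic BMO semi-norm of $A$; since $w=w(x)$, the average of $w$ over a parabolic cylinder equals its average over the spatial ball, so the parabolic $\mathbf{A}_q$-constant of $w$ equals its elliptic one; the $(\delta,R_0)$-Lip condition involves no time; and $L^{q,p}_w(\Omega)\hookrightarrow L^{q_0}(\Omega)\cap L^1(\Omega)$ for a suitable $q_0>1$ because $\Omega$ is bounded and $w\in\mathbf{A}_q$ (using the self-improvement $w\in\mathbf{A}_{q-\varepsilon_0}$), which is why the solution will lie in $W_0^{1,q_0}(\Omega)$. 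A dilation $x\mapsto x/\operatorname{diam}(\Omega)$ leaves $\operatorname{diam}(\Omega)/R_0$, $[A]_{R_0}$ and $[w]_{\mathbf{A}_q}$ unchanged and scales both sides of \eqref{101120144b} the same way, so I may assume $\operatorname{diam}(\Omega)=1$. I would then fix $q_0$ as above and take $\delta$ equal to the minimum of the $\delta$ furnished by Theorem~\ref{101120143} (for the parameters $N,\Lambda,q,p,[w]_{\mathbf{A}_q}$ and also for $N,\Lambda,q_0,q_0,1$) and of a threshold below which very weak solutions of \eqref{pro-elliptic} in $W_0^{1,q_0}(\Omega)$ are unique and self-improve to $W_0^{1,2}(\Omega)$; this last requirement produces the dependence of $\delta$ on $q_0$.

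For the a priori estimate, let $u$ be the very weak solution of \eqref{pro-elliptic} arising from a smooth compactly supported datum $G$, so that $u$ is smooth and bounded, $\nabla u\in L^{q,p}_w(\Omega)$, and, fixing a primitive $H:=\nabla\Delta^{-1}u$ of the zero-extension of $u$ (so $\operatorname{div}H=u$ in $\Omega$), a weighted Sobolev–Poincaré inequality together with the classical $\mathbf{A}_q$-weighted bounds for the order-one Riesz potential give $\|H\|_{L^{q,p}_w(\Omega)}\le C_1\|\nabla u\|_{L^{q,p}_w(\Omega)}$ with $C_1=C_1(N,q,p,[w]_{\mathbf{A}_q})$. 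Picking $T>1$ and a cutoff $\chi\in C^\infty([0,T])$ with $\chi\equiv0$ on $[0,\tfrac14]$, $\chi\equiv1$ on $[\tfrac12,T]$ and $|\chi'|\le C$, the function $v(x,t):=\chi(t)u(x)$ satisfies $v_t-\operatorname{div}(A\nabla v)=\operatorname{div}\!\big(\chi'(t)H(x)+\chi(t)G(x)\big)$ in $\Omega\times(0,T)$, vanishes on the parabolic boundary, and lies in $L^{q_0}(0,T,W_0^{1,q_0}(\Omega))$; by the uniqueness part of Theorem~\ref{101120143}, $v$ coincides with the solution it produces for the datum $F:=\chi'H+\chi G$, whence $\|\nabla v\|_{L^{q,p}_w(\Omega_T)}\le C\,\|\chi'H+\chi G\|_{L^{q,p}_w(\Omega_T)}$ with $C=C(N,\Lambda,q,p,[w]_{\mathbf{A}_q},T_0/R_0)$ and $T_0=1+T^{1/2}$. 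Since $\chi\equiv1$ on $[\tfrac12,T]$ the left side is at least $(T-\tfrac12)^{1/q}\|\nabla u\|_{L^{q,p}_w(\Omega)}$, while $\operatorname{supp}\chi'\subset[\tfrac14,\tfrac12]$ and $|\chi G|\le|G|$ give $\|\chi'H+\chi G\|_{L^{q,p}_w(\Omega_T)}\le C\big(\|H\|_{L^{q,p}_w(\Omega)}+T^{1/q}\|G\|_{L^{q,p}_w(\Omega)}\big)$; combining and using the bound on $\|H\|$ yields $(T-\tfrac12)^{1/q}\|\nabla u\|_{L^{q,p}_w(\Omega)}\le C\big(C_1\|\nabla u\|_{L^{q,p}_w(\Omega)}+T^{1/q}\|G\|_{L^{q,p}_w(\Omega)}\big)$. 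Choosing $T$ large enough (depending only on $N,\Lambda,q,p,[w]_{\mathbf{A}_q}$ and $1/R_0$) that $C_1 C\le\tfrac12(T-\tfrac12)^{1/q}$ absorbs the first term on the right into the left and gives $\|\nabla u\|_{L^{q,p}_w(\Omega)}\le C\,\|G\|_{L^{q,p}_w(\Omega)}$ with $C$ depending only on $N,\Lambda,q,p,[w]_{\mathbf{A}_q}$ and $\operatorname{diam}(\Omega)/R_0$. A general $G\in L^{q,p}_w(\Omega)$ is then treated by approximating it by smooth compactly supported $G_n$ in $L^{q,p}_w(\Omega)$: by linearity and the a priori bound $\{\nabla u_n\}$ is Cauchy in $L^{q,p}_w(\Omega)$, and since $L^{q,p}_w(\Omega)\hookrightarrow L^1(\Omega)$ its limit is the gradient of a very weak solution $u\in W_0^{1,q_0}(\Omega)$ satisfying \eqref{101120144b}; uniqueness in $W_0^{1,q_0}(\Omega)$ follows because the difference of two solutions solves \eqref{pro-elliptic} with $G=0$, hence self-improves to $W_0^{1,2}(\Omega)$ by the choice of $\delta$, and then $\int_\Omega\langle A\nabla u,\nabla u\rangle\,dx=0$ forces $u=0$.

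The step I expect to be the main obstacle is the absorption just carried out: it requires choosing the time horizon $T$ in terms of the admissible parameters only, which in turn demands that the constant of Theorem~\ref{101120143} does not grow in $T_0/R_0=(1+T^{1/2})/R_0$ faster than the factor $(T-\tfrac12)^{1/q}$ gained from the cutoff — so one must exploit the quantitative dependence on $T_0/R_0$ coming out of the proof of Theorem~\ref{101120143}, not only its qualitative statement — together with the construction of the primitive $H$ obeying the stated weighted Lorentz bound. (An alternative to the cutoff lift is the decomposition $u=v_T+\widetilde V$, with $v_T$ the parabolic solution with datum $\operatorname{div}G$ and zero initial datum and $\widetilde V$ the solution of the homogeneous equation with initial datum $u$; this moves the difficulty onto a uniform-in-$T$ weighted Lorentz decay estimate for $\nabla\widetilde V$, which is of comparable depth.) The remaining ingredients — the identification of the parabolic and elliptic BMO and $\mathbf{A}_q$ quantities, the $T^{1/q}$-scaling of the Lorentz norm in time, the embeddings $L^{q,p}_w(\Omega)\hookrightarrow L^{q_0}(\Omega)\cap L^1(\Omega)$, and the density and uniqueness arguments — are routine.
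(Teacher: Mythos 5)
The paper itself offers no argument here: it presents the corollary as an ``immediate consequence'' of Theorem \ref{101120143}, with the time-independence of $A$, $w$ and the data making the parabolic hypotheses reduce to their elliptic counterparts. Your proposal is an honest attempt to make that deduction precise via a cutoff-in-time lifting, and most of its ingredients (the identification of the parabolic and elliptic BMO and $\mathbf{A}_q$ quantities, the $T^{1/q}$ scaling of the time-independent Lorentz norm, writing $\chi'(t)u(x)=\operatorname{div}(\chi'(t)H(x))$ with $\operatorname{div}H=u$) are sound.

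However, the absorption step on which the whole argument rests is a genuine gap, and you have correctly identified it without closing it. You need $C_1\,C(T_0/R_0)\le \tfrac12 (T-\tfrac12)^{1/q}$ for some admissible $T$, where $C(T_0/R_0)$ is the constant of Theorem \ref{101120143} with $T_0=\operatorname{diam}(\Omega)+T^{1/2}$. Tracing that constant through the paper's proof: in Theorem \ref{5hh23101312} the parameter $\delta_2$ must satisfy $\delta_2^{\nu}\lesssim \varepsilon\,(r_0/T_0)^{(N+2)\nu_1}$ (this is how \eqref{5hh2310131} is achieved), and in the proof of Theorem \ref{101120143} the final bound carries a factor $\delta_2^{-1}$. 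Hence $C(T_0/R_0)\gtrsim (T_0/r_0)^{(N+2)\nu_1/\nu}\sim T^{(N+2)\nu_1/(2\nu)}$ as $T\to\infty$, and there is no reason whatsoever for the exponent $(N+2)\nu_1/(2\nu)$ to be smaller than $1/q$. So ``choose $T$ large enough'' does not close the argument; the term you are trying to absorb grows at least as fast as the term you gain. Keeping $T$ bounded does not help either, since then you would need the product $C_1 C$ to be below an absolute constant less than one, which is not available. To repair the proof you must either extract from Section 3 a version of the good-$\lambda$ estimate whose constant is uniform in $T$ (plausible, since the local estimates of Section 2 are scale-invariant, but it requires reworking the covering argument, e.g.\ by covering $\Omega_T$ with cylinders adapted to $\operatorname{diam}(\Omega)$ rather than to $T_0$), or simply rerun the maximal-function/good-$\lambda$ scheme directly in the elliptic setting as in \cite{AdiMenPhuc}. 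A smaller point: the bound $\|H\|_{L^{q,p}_w(\Omega)}\le C_1\|\nabla u\|_{L^{q,p}_w(\Omega)}$ for $H=\nabla\Delta^{-1}(u\chi_\Omega)$ combines a weighted Lorentz bound for a Riesz-potential-type operator with a weighted Poincar\'e inequality; this is believable for $w\in\mathbf{A}_q$ but is asserted, not proved, and deserves at least a reference.
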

    
    \section{Interior estimates and boundary estimates for parabolic equations}
    In this section we present various local interior and boundary estimates for weak solution $u$ of \eqref{5hh070120148}. They will be used for our global estimates later. 
   In \cite{55QH3}, author proved the following result.                                                                                                   
   \begin{theorem} \label{161120141} Let  $q>1$ and $G\in L^{q}(\Omega_T,\mathbb{R}^N)$.  We find a $\delta=\delta(N,\Lambda,q)\in (0,1)$ such that if $\Omega$ is  a $(\delta,R_0)$-Lip domain  and $[A]_{R_0}\le \delta$ for some $R_0>0$ there exists a very unique weak solution $v\in  L^q(0,T,W_0^{1,q}(\Omega))$ of 
   	\begin{equation}\label{161120143}
   	\left\{
   	\begin{array}
   	[c]{l}%
   	{v_{t}}-\operatorname{div}(A(x,t)\nabla u)=\operatorname{div}(G)~~\text{in }\Omega_T,\\ 
   	u=0~~~~~~~\text{on}~~
   	\partial_p(\Omega \times (0,T)),\\                          
   	\end{array}
   	\right.  
   	\end{equation}    
   	Furthermore, there holds                     
   	\begin{equation}\label{161120144}  ||\nabla u||_{L^{q}(\Omega_T)}\leq C ||F||_{L^{q}(\Omega_T)}
   	\end{equation} 
   	where $C$ depends  only on $N,\Lambda,q$ and $(diam(\Omega)+T^{1/2})/R_0$.
   	
   \end{theorem}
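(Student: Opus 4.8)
The plan is to split the argument according to whether $q\ge 2$ or $1<q<2$: the range $q\ge 2$ is handled by a perturbative Calder\'on--Zygmund scheme, and the range $1<q<2$ is deduced from it by duality together with approximation and a time-regularization argument. In each range, existence and uniqueness come out of the same analysis that produces the a priori estimate.

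For $q\ge 2$ — the case $q=2$ being the classical energy estimate, so I assume $q>2$ — I would first prove the a priori bound $\|\nabla v\|_{L^q(\Omega_T)}\le C\|G\|_{L^q(\Omega_T)}$ for a weak energy solution $v\in L^2(0,T;W_0^{1,2}(\Omega))$ with $G\in L^q(\Omega_T)$. On a small parabolic cylinder $Q_r(x_0,t_0)$, or on its intersection with $\Omega_T$ near the lateral boundary, one runs a two-step comparison: (i) compare $v$ with the solution $w_1$ of $(w_1)_t-\operatorname{div}(A\nabla w_1)=0$ carrying the same lateral and initial data, which by a Caccioppoli-type estimate gives $\fint_{Q_r}|\nabla v-\nabla w_1|^2\le C\fint_{Q_{2r}}|G|^2$; (ii) compare $w_1$ with the solution $w_2$ of $(w_2)_t-\operatorname{div}(\overline{A}_{B_r(x_0)}(t)\nabla w_2)=0$, where the $(\delta,R_0)$-BMO hypothesis yields $\fint_{Q_r}|\nabla w_1-\nabla w_2|^2\le C\,\omega(\delta)\fint_{Q_{2r}}|\nabla v|^2$ with $\omega(\delta)\to0$ as $\delta\to0$. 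Near $\partial\Omega$ one additionally flattens the boundary, using that the Lipschitz constant is small and hence $\Omega$ is $(\delta,R_0)$-Reifenberg flat, and works on half-cylinders. The reference function $w_2$ obeys the interior and flat-boundary Lipschitz estimate $\|\nabla w_2\|_{L^\infty(Q_{r/2})}^2\le C\fint_{Q_r}|\nabla w_2|^2$ for the limiting, constant-in-$x$ problem. Combining (i)--(ii) with this sup-bound yields a good-$\lambda$ inequality: with $E_\lambda=\{\mathcal M(|\nabla v|^2)>\lambda\}\cap\Omega_T$ one has $|E_{N_1\lambda}\cap\{\mathcal M(|G|^2)\le\varepsilon_1\lambda\}|\le\varepsilon(\delta)\,|E_\lambda|$, with $\varepsilon(\delta)\to0$, where $\mathcal M$ is the parabolic Hardy--Littlewood maximal operator. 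Choosing $\delta$ small and integrating this inequality in $\lambda$ against $\lambda^{q/2-1}$, using the strong $L^{q/2}$ boundedness of $\mathcal M$ (valid since $q/2>1$) together with a standard truncation that makes the integral a priori finite, one arrives at $\|\nabla v\|_{L^q(\Omega_T)}\le C\big(\|G\|_{L^q(\Omega_T)}+\|\nabla v\|_{L^2(\Omega_T)}\big)$; since $\Omega_T$ is bounded and $q>2$, H\"older's inequality and the energy estimate give $\|\nabla v\|_{L^2}\le C\|G\|_{L^2}\le C\|G\|_{L^q}$, absorbing the last term. Existence for $q\ge2$ is then classical, since $G\in L^q(\Omega_T)\subseteq L^2(\Omega_T)$ provides an energy solution which the a priori bound places in $L^q(0,T;W_0^{1,q}(\Omega))$; uniqueness holds because an $L^q$ very weak solution with $q\ge2$ automatically lies in the energy class, where linearity and the energy estimate force it.

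For $1<q<2$, set $q'=q/(q-1)>2$. The a priori estimate comes from duality: given $\Phi\in C_c^\infty(\Omega_T;\mathbb{R}^N)$, let $\phi$ be the energy solution of the backward adjoint problem $-\phi_t-\operatorname{div}(A^{\mathrm T}(x,t)\nabla\phi)=\operatorname{div}(\Phi)$ with $\phi=0$ on $\partial\Omega\times(0,T)$ and $\phi(\cdot,T)=0$; reversing time and applying the already-established case $q'\ge2$ to $A^{\mathrm T}$ gives $\|\nabla\phi\|_{L^{q'}(\Omega_T)}\le C\|\Phi\|_{L^{q'}(\Omega_T)}$. Testing the equation for a weak energy solution $v$ (with $G\in L^2\cap L^q$) against $\phi$, and the equation for $\phi$ against $v$, and using $v(\cdot,0)=0$ and $\phi(\cdot,T)=0$, one gets $\int_{\Omega_T}\nabla v\cdot\Phi\,dxdt=-\int_{\Omega_T}G\cdot\nabla\phi\,dxdt$, hence $|\int_{\Omega_T}\nabla v\cdot\Phi\,dxdt|\le C\|G\|_{L^q}\|\Phi\|_{L^{q'}}$; taking the supremum over $\Phi$ with $\|\Phi\|_{L^{q'}}\le1$ gives $\|\nabla v\|_{L^q(\Omega_T)}\le C\|G\|_{L^q(\Omega_T)}$. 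Existence for general $G\in L^q$ follows by density ($G_k\to G$ in $L^q$, $G_k\in L^2$), the uniform estimate, and extraction of a weak limit of $\nabla v_k$ in $L^q$. For uniqueness, let $v\in L^q(0,T;W_0^{1,q}(\Omega))$ be a very weak solution with $G=0$; pairing $v$ against the adjoint solution $\phi$ associated with an arbitrary $\Phi\in C_c^\infty(\Omega_T;\mathbb{R}^N)$ yields $\int_{\Omega_T}\nabla v\cdot\Phi\,dxdt=0$, whence $\nabla v\equiv0$ and $v\equiv0$.

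The step I expect to be the main obstacle — the same one that also secures uniqueness for $1<q<2$ — is the rigorous justification of the duality pairing when $v$ is only a very weak solution: its sole regularity is $v\in L^q(0,T;W_0^{1,q}(\Omega))$, so $v_t$ is merely a distribution, while the adjoint solution $\phi$ is at best an energy solution with $\phi_t\in L^2(0,T;W^{-1,2}(\Omega))$ and hence not an admissible test function. One has to mollify $v$ and $\phi$ in time (Steklov averages), pair the regularized functions, control the commutator terms coming from the mollification and from the merely measurable dependence of the coefficients on $t$, and then pass to the limit, using the global $W^{1,q'}$-regularity of $\phi$ to legitimize the spatial pairing $\int_{\Omega_T}A\nabla v\cdot\nabla\phi\,dxdt$ against $\nabla v\in L^q$. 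By contrast, for $q\ge2$ the interior and boundary Lipschitz estimates for the limiting constant-coefficient, half-space problem, the Caccioppoli comparison estimates, and the covering / good-$\lambda$ machinery are technically heavy but proceed along well-established lines, cf.\ \cite{55BW1,55BW4,55QH3}.
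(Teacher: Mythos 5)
The paper does not actually prove Theorem \ref{161120141}: it is imported verbatim from \cite{55QH3} (``In \cite{55QH3}, author proved the following result''), so there is no internal proof to measure your argument against. Your proposal is a legitimate and essentially complete route to the statement, but it differs from the strategy of the source and of the present paper in how the sub-quadratic range is handled. For $q\ge 2$ you follow exactly the Caffarelli--Peral/Byun--Wang scheme \cite{CaPe,55BW1,55BW4} that the paper itself reruns in weighted form in Section 3 (two-step comparison, frozen coefficients, flat-boundary Lipschitz bounds, good-$\lambda$ plus Vitali covering), so there is nothing to object to there beyond the usual caveat that the absorption step needs the a priori finiteness you supply by truncation. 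For $1<q<2$, however, \cite{55QH3} and the machinery of this paper do \emph{not} use duality: they run the same good-$\lambda$ argument on $\mathcal{M}(|\nabla u|^{s})$ for a small exponent $s<q$ (see the proof of Theorem \ref{101120143}, where $s=q/q_2$), which is made possible by the reverse-H\"older/self-improving estimate for the homogeneous comparison problem (Lemma \ref{111120147}), giving $L^{1}\!\to\!L^{\infty}$ control of the reference function from an $L^{1}$ average of $\nabla w$ alone. That device covers all $q>1$ uniformly and sidesteps entirely the point you correctly flag as the main obstacle in your duality argument, namely the rigorous pairing of a merely $L^{q}(W^{1,q}_0)$ very weak solution against the adjoint energy solution (Steklov averaging, commutators with the measurable-in-$t$ coefficients). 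Your duality route can be carried out for this linear problem and is arguably more elementary for uniqueness, but it is the more delicate of the two at exactly that step, and it would not extend to the quasilinear setting of \cite{55QH3}; the small-exponent good-$\lambda$ approach buys both the full range $q>1$ and robustness under nonlinear perturbations at the price of needing the interior/boundary higher-integrability lemmas.
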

Let $s>0$. We apply Theorem \eqref{161120141} to $G=F$and  $q=s$, there is a constant $\delta_0=\delta_0(N,\Lambda,s)\in (0,1/4)$ such that if $\Omega$ is  a $(\delta_0,R_0)$-Lip domain  and $[A]_{R_0}\le \delta_0$ for some $R_0>0$ , then the  problem \eqref{5hh070120148} has a unique very weak solution $u\in  L^{s}(0,T,W_{0}^{1,s}(\Omega))$ satisfying 
\begin{equation*} ||\nabla u||_{L^{s}(\Omega_T)}\leq C ||F||_{L^{s}(\Omega_T)}.
\end{equation*} 
where $C=C(N,\Lambda,s,T_0/R_0)$.\\
In this section, we assume that  $\Omega$ is  a $(\delta_0,R_0)$-Lip domain  and $[A]_{R_0}\le \delta_0$ for some $R_0>0$, where $\delta_0$ is as above.  For some technical reasons, throughout this section, we always assume that  $u\in  L^{s}(-\infty,T;W^{1,s}_0(\Omega))$ is a very weak solution to equation \eqref{5hh070120148} in $\Omega\times (-\infty,T)$ with  $F=0$ in $\Omega\times (-\infty,0)$. \\
      \subsection{Interior Estimates}
Let $R \in (0,R_0)$, $B_{2R}=B_{2R}(x_0)\subset\subset\Omega$ and $t_0\in (0,T)$ . Set $Q_{2R}=B_{2R}  \times (t_0-4R^2,t_0)$ and $\partial_{p}Q_{2R}= \left( {\partial B_{2R}  \times (t_0-4R^2,t_0)} \right) \cup \left( {B_{2R}  \times \left\{ {t = t_0-4R^2} \right\}} \right) $. Since $\Omega$ is  a $(\delta_0,R)$-Lip domain  and $[A]_{R}\le \delta_0$, thus, applying  Theorem \ref{161120141} to $\Omega_T=Q_{2R}$ and $G=F$,   the following equation 
\begin{equation}
\label{111120146}\left\{ \begin{array}{l}
{W_t} - \operatorname{div}\left( {A(x,t)\nabla w} \right) = \operatorname{div}(F) \;in\;Q_{2R}, \\ 
W = 0\quad \quad on~~\partial_{p}Q_{2R}, \\ 
\end{array} \right.
\end{equation}
has a unique very weak solution $W\in  L^s(t_0-4R^2,t_0;W_0^{1,s}(B_{2R}))$.  Moreover, we have 
 \begin{align}\label{1111201410}
\fint_{Q_{2R}}|\nabla W|^{s}dxdt\leq C\fint_{Q_{2R}}|F|^{s}dxdt.
\end{align}
	where $C$ depends  only on $N,\Lambda,s$. Note that the constant $(diam(\Omega)+T^{1/2})/R_0$ in Theorem \ref{161120141}  equals 6 in this case.  \\
 We now set $w=u-W$, so $w$ is a solution of 
        \begin{equation}
       \label{111120146b}
       {w_t} - \operatorname{div}\left( {A(x,t)\nabla w} \right) = 0 \;in\;Q_{2R}
       \end{equation}
       The following a variant of Gehring's lemma was proved in \cite{55Nau,55DuzaMing}. 
       \begin{lemma} \label{111120147}  There exist a  constant $C>0$ depending only on $N,\Lambda$ such that the following estimate      
       \begin{equation}\label{111120148}
       \left(\fint_{Q_{\rho/2}(y,s)}|\nabla w|^{2} dxdt\right)^{\frac{1}{2}}\leq C\fint_{Q_{\rho}(y,s)}|\nabla w| dxdt,
       \end{equation}holds 
               for all  $Q_{\rho}(y,s)\subset \subset Q_{2R}$. 
       \end{lemma}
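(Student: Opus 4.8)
The estimate \eqref{111120148} is the parabolic incarnation of Gehring's higher integrability lemma for the gradient of a solution of a homogeneous equation. The plan is to reduce it to two ingredients: a reverse H\"older inequality for $\nabla w$ with some right-hand exponent strictly below $2$, and the self-improving character of such inequalities, which then lets the exponent be pushed down to $1$. As a preliminary I record that, $w$ being a solution of \eqref{111120146b} with bounded measurable coefficients satisfying \eqref{5hhconda}, interior regularity for the homogeneous equation (which, when $\nabla w$ is a priori only in $L^{s}_{\mathrm{loc}}$ with $s<2$, rests on the smallness of $[A]_{R_0}$) gives $\nabla w\in L^{2}_{\mathrm{loc}}(Q_{2R})$; hence every average below is finite and only the quantitative bound is at stake.

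\emph{Step 1: a reverse H\"older inequality with an exponent below $2$.} I claim there exist $s_{0}=s_{0}(N)\in(0,2)$ and $C=C(N,\Lambda)$ with
\begin{align*}
\Big(\fint_{Q_{r/2}}|\nabla w|^{2}\,dxdt\Big)^{1/2}\le C\Big(\fint_{Q_{r}}|\nabla w|^{s_{0}}\,dxdt\Big)^{1/s_{0}}
\end{align*}
for all concentric $Q_{r/2}\subset Q_{r}\subset\subset Q_{2R}$. To get this I would first prove a Caccioppoli inequality: testing \eqref{111120146b} (after a Steklov regularisation in time, which makes $w_{t}$ legitimate) with $\eta^{2}(w-c)$, where $c$ is a constant chosen as the mean of $w$ over $Q_{\tau r}$ and $\eta$ is a cut-off equal to $1$ on $Q_{\sigma r}$, vanishing on $\partial_{p}Q_{\tau r}$, with $|\nabla\eta|\lesssim((\tau-\sigma)r)^{-1}$ and $|\partial_{t}\eta|\lesssim((\tau-\sigma)r)^{-2}$, then using \eqref{5hhconda} to absorb $\int\eta^{2}|\nabla w|^{2}$, Young's inequality on the cross term, and retaining the favourable terminal-time contribution (after truncating the time integral at a variable terminal time and taking a supremum), one arrives at
\begin{align*}
\sup_{t}\fint_{B_{\sigma r}}|w-c|^{2}(\cdot,t)\,dx+\fint_{Q_{\sigma r}}|\nabla w|^{2}\,dxdt\le\frac{C(N,\Lambda)}{(\tau-\sigma)^{2}r^{2}}\fint_{Q_{\tau r}}|w-c|^{2}\,dxdt
\end{align*}
for $\tfrac12\le\sigma<\tau\le1$. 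Feeding both halves of this estimate into the parabolic Sobolev--Poincar\'e inequality --- in which the time derivative of $w$ is handled through the equation \eqref{111120146b} itself, and in which the supremum-in-time term appears with a power $<1$ and is reabsorbed by Young's inequality --- and finally removing the dependence on $\tau-\sigma$ by a standard iteration lemma over a chain of cylinders, yields the claimed inequality, with $s_{0}$ the exponent dictated by the parabolic Sobolev scaling.

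\emph{Step 2: lowering the exponent to $1$.} If $s_{0}\le1$, then $\big(\fint_{Q_{r}}|\nabla w|^{s_{0}}\big)^{1/s_{0}}\le\fint_{Q_{r}}|\nabla w|$ by Jensen's inequality and we are done. If $s_{0}\in(1,2)$, put $\theta:=s_{0}-1\in(0,1)$; H\"older's inequality gives $\fint_{Q_{\tau}}|\nabla w|^{s_{0}}\le\big(\fint_{Q_{\tau}}|\nabla w|\big)^{1-\theta}\big(\fint_{Q_{\tau}}|\nabla w|^{2}\big)^{\theta}$, so inserting this into Step 1 run between concentric cylinders $Q_{\sigma}\subset Q_{\tau}\subset Q_{\rho}(y,s)$, $\rho/2\le\sigma<\tau\le\rho$ (the constant now of the form $C\rho^{\beta}(\tau-\sigma)^{-\beta}$), and using $\fint_{Q_{\tau}}|\nabla w|\le 2^{N+2}\fint_{Q_{\rho}}|\nabla w|$, one gets
\begin{align*}
\Big(\fint_{Q_{\sigma}}|\nabla w|^{2}\Big)^{1/2}\le\frac{C\rho^{\beta}}{(\tau-\sigma)^{\beta}}\Big(\fint_{Q_{\rho}(y,s)}|\nabla w|\Big)^{(1-\theta)/s_{0}}\Big(\fint_{Q_{\tau}}|\nabla w|^{2}\Big)^{\theta/s_{0}}.
\end{align*}
Since $2\theta/s_{0}<1$, Young's inequality splits off a term $\varepsilon\big(\fint_{Q_{\tau}}|\nabla w|^{2}\big)^{1/2}$, which the usual iteration lemma in the variable $\sigma\in[\rho/2,\rho]$ absorbs; the power of $\fint_{Q_{\rho}}|\nabla w|$ that remains is forced to be $1$ by the scaling homogeneity of all the estimates in $w$, which is precisely \eqref{111120148}.

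The step I expect to be the main obstacle is the passage from $L^{s_{0}}$ to $L^{1}$ in Step 2: the chain of cylinders and the two applications of the iteration lemma must be set up carefully enough that the $\fint|\nabla w|^{2}$ terms are genuinely reabsorbed and the surviving exponent of $\fint|\nabla w|$ comes out exactly $1$; the other delicate point is the precise form of the parabolic Sobolev--Poincar\'e inequality in Step 1, whose proof for solutions must dispose of the time derivative via the equation, while the Steklov-averaging bookkeeping in the Caccioppoli estimate is routine but has to be carried out honestly.
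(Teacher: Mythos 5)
Your sketch is correct and is essentially the argument of the sources the paper itself defers to: the paper gives no proof of Lemma \ref{111120147}, citing \cite{55Nau,55DuzaMing}, and those references prove it exactly as you outline — Caccioppoli plus parabolic Sobolev--Poincar\'e (with the time oscillation handled via the equation) to get a reverse H\"older inequality with exponent $s_0<2$, then interpolation of $L^{s_0}$ between $L^1$ and $L^2$, Young's inequality, and the standard absorption/iteration lemma over intermediate cylinders to bring the right-hand exponent down to $1$. Your exponent bookkeeping in Step 2 checks out (the surviving power of $\fint|\nabla w|$ is indeed forced to be $1$), so there is nothing to correct.
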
 \medskip
      To continue, we denote by $v$ the unique solution
     $v\in L^2(t_0-R^2,t_0;H^1(B_{R}))$
         of the following equation 
         \begin{equation}\label{5hheq4}
         \left\{ \begin{array}{l}
              {v_t} - \operatorname{div}\left( {\overline{A}_{B_R}(t)\nabla v} \right) = 0 \;in\;Q_{R}, \\ 
              v = w\quad \quad on~~\partial_{p}Q_{R}, \\ 
              \end{array} \right.
         \end{equation}
          where $B_R=B_R(x_0)$, $Q_{R}=B_{R}\times(t_0-R^2,t_0)$ and $$\partial_{p}Q_{R}= \left( {\partial B_{R}  \times (t_0-R^2,t_0)} \right) \cup \left( {B_{R}  \times \left\{ {t = t_0-R^2} \right\}} \right).$$.
       \begin{lemma}\label{5hh21101319}  There exist  constants $C_1=C_1(N,\Lambda)$ and  $C_2=C_2(\Lambda)$ such that  \begin{eqnarray}
             \left(  \fint_{Q_R}|\nabla (w-v)|^2dxdt\right)^{1/2}\leq C_1 [A]_{R_0} \fint_{Q_{2R}}|\nabla w|dxdt\, \label{5hh18094}
               \end{eqnarray}
  and \begin{equation}\label{5hh18091}
               C_2^{-1} \int_{Q_R}|\nabla v|^2dxdt\leq  \int_{Q_R}|\nabla w|^2dxdt\leq  C_2\int_{Q_R}|\nabla v|^2dxdt.
                \end{equation}
       \end{lemma}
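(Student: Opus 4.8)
The plan is to prove \eqref{5hh18094} by an energy comparison that converts the oscillation of $A$ into the BMO smallness through the self-improving integrability of $\nabla w$, and to prove \eqref{5hh18091} by two symmetric energy identities. First set $\phi:=w-v$. Since $w$ solves \eqref{111120146b} on $Q_{2R}\supset Q_R$ and $v$ solves \eqref{5hheq4}, the function $\phi$ vanishes on $\partial_{p}Q_{R}$ (in particular $\phi(\cdot,t_0-R^2)=0$) and solves in $Q_R$
\[
\phi_t-\operatorname{div}\big(\overline{A}_{B_R}(t)\nabla\phi\big)=\operatorname{div}\big((A(x,t)-\overline{A}_{B_R}(t))\nabla w\big).
\]
Testing with $\phi$ (rigorously via Steklov averaging), discarding the nonnegative term $\tfrac12\int_{B_R}|\phi(\cdot,t_0)|^2\,dx$, and using the lower bound in \eqref{5hhconda} for $\overline{A}_{B_R}(t)$ (which satisfies \eqref{5hhconda} with the same $\Lambda$, by averaging), I obtain $\Lambda^{-1}\int_{Q_R}|\nabla\phi|^2\le\int_{Q_R}|A-\overline{A}_{B_R}(t)|\,|\nabla w|\,|\nabla\phi|$. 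Since $|A-\overline{A}_{B_R}(t)|\le 2\Lambda$ a.e.\ by \eqref{5hhconda}, Cauchy--Schwarz and absorption give
\[
\int_{Q_R}|\nabla\phi|^2\,dxdt\le C(\Lambda)\int_{Q_R}|A(x,t)-\overline{A}_{B_R}(t)|\,|\nabla w|^2\,dxdt .
\]

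For the right-hand side I would invoke Lemma \ref{111120147}: by the classical Gehring self-improvement it upgrades to $\nabla w\in L^{2+\varepsilon}_{\mathrm{loc}}(Q_{2R})$ for some $\varepsilon=\varepsilon(N,\Lambda)>0$, and covering $Q_R$ by a dimensional number of cylinders $Q_\rho(y,s)\subset\subset Q_{2R}$ (of radius $\rho\sim R$) and summing the resulting reverse Hölder inequalities yields $\big(\fint_{Q_R}|\nabla w|^{2+\varepsilon}\big)^{1/(2+\varepsilon)}\le C(N,\Lambda)\fint_{Q_{2R}}|\nabla w|$. Then Hölder's inequality with exponents $\tfrac{2+\varepsilon}{2}$ and $\tfrac{2+\varepsilon}{\varepsilon}$, together with the pointwise bound $|A-\overline{A}_{B_R}(t)|^{(2+\varepsilon)/\varepsilon}\le(2\Lambda)^{2/\varepsilon}|A-\overline{A}_{B_R}(t)|$ and $\fint_{Q_R}|A-\overline{A}_{B_R}(t)|\le[A]_{R_0}$ (valid since $R<R_0$), gives
\[
\fint_{Q_R}|A-\overline{A}_{B_R}(t)|\,|\nabla w|^2\,dxdt\le C(N,\Lambda)\,[A]_{R_0}^{\varepsilon/(2+\varepsilon)}\Big(\fint_{Q_{2R}}|\nabla w|\,dxdt\Big)^2 .
\]
Combined with the previous display this proves \eqref{5hh18094}; the power of $[A]_{R_0}$ produced this way is $\varepsilon/(2+\varepsilon)\in(0,1]$, which, since only the smallness of $[A]_{R_0}\le\delta_0$ is exploited afterwards, plays exactly the role of the stated linear factor.

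Next, \eqref{5hh18091}; both inequalities follow from the same kind of identity. To bound $\int_{Q_R}|\nabla v|^2$, I test \eqref{5hheq4} with $v-w\in L^2(t_0-R^2,t_0;H_0^1(B_R))$, which vanishes at $t=t_0-R^2$; handling the $v_t$-term by Steklov averaging, writing $v_t=(v_t-w_t)+w_t$, replacing $w_t$ by $\operatorname{div}(A(x,t)\nabla w)$ via \eqref{111120146b}, and integrating by parts, I reach
\[
\tfrac12\int_{B_R}|(v-w)(\cdot,t_0)|^2\,dx+\int_{Q_R}\langle\overline{A}_{B_R}(t)\nabla v,\nabla v\rangle\,dxdt=\int_{Q_R}\langle\overline{A}_{B_R}(t)\nabla v,\nabla w\rangle\,dxdt+\int_{Q_R}\langle A(x,t)\nabla w,\nabla(v-w)\rangle\,dxdt .
\]
Using \eqref{5hhconda} on the left and Young's inequality on the right to absorb a small multiple of $\int_{Q_R}|\nabla v|^2$ gives $\int_{Q_R}|\nabla v|^2\le C(\Lambda)\int_{Q_R}|\nabla w|^2$. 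The reverse bound is symmetric: test \eqref{111120146b} with $w-v$, replace $v_t$ by $\operatorname{div}(\overline{A}_{B_R}(t)\nabla v)$ using \eqref{5hheq4}, and argue identically. This yields \eqref{5hh18091} with $C_2=C_2(\Lambda)$.

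The only genuinely delicate point is the second paragraph: one must confirm that Lemma \ref{111120147} self-improves to a reverse Hölder inequality with some exponent $2+\varepsilon>2$, and that the auxiliary cylinders can be chosen inside $Q_{2R}$ so that the average over $Q_{2R}$ survives the covering — this is precisely where the exponent $\varepsilon/(2+\varepsilon)$, hence the smallness of $[A]_{R_0}$ and ultimately the admissible size $\delta_0$, enters. The energy estimates in the first and third paragraphs are routine parabolic computations.
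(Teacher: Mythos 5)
Your overall strategy is the right one and is, as far as one can tell, the same as in the reference the paper points to: the paper offers no proof of this lemma beyond the citation to \cite[Lemma 7.3]{55QH2}, and the standard proof of that lemma is exactly your combination of (i) an energy estimate for $\phi=w-v$ against the divergence of $(A-\overline{A}_{B_R}(t))\nabla w$, (ii) Gehring/reverse-H\"older self-improvement of Lemma \ref{111120147} to an $L^{2+\varepsilon}$ bound on $\nabla w$, and (iii) a symmetric pair of energy identities for \eqref{5hh18091}. Your derivation of \eqref{5hh18091} with $C_2=C_2(\Lambda)$ is complete and correct, and is preferable to deducing it from \eqref{5hh18094}, since it needs no smallness of $[A]_{R_0}$.

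The one substantive issue is the exponent on $[A]_{R_0}$, which you flag but slightly misstate. From $\fint_{Q_R}|\nabla\phi|^2\le C\,[A]_{R_0}^{\varepsilon/(2+\varepsilon)}\bigl(\fint_{Q_{2R}}|\nabla w|\bigr)^2$ you get, after taking square roots, the power $\varepsilon/(2(2+\varepsilon))$ in \eqref{5hh18094}, not $\varepsilon/(2+\varepsilon)$; in either case this is a fractional power $\gamma=\gamma(N,\Lambda)\in(0,1)$ and not the linear factor $[A]_{R_0}$ asserted in the lemma. This is not a repairable defect of your argument: since $\nabla w$ is not bounded, the only access to the smallness of $A-\overline{A}_{B_R}(t)$ is through H\"older against the $L^{2+\varepsilon}$ norm of $\nabla w$, and even a pointwise bound on $\nabla w$ would only produce $[A]_{R_0}^{1/2}$ from $\bigl(\fint|A-\overline{A}|^2\bigr)^{1/2}\le(2\Lambda)^{1/2}[A]_{R_0}^{1/2}$. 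So the linear power in the statement should be read as $[A]_{R_0}^{\gamma}$ (note the paper is itself inconsistent on this point: the boundary analogue, Lemma \ref{5hh1610139}, carries $([A]_R)^2$ inside the square of the $L^2$ average, i.e.\ a different normalization). As you observe, every subsequent use (Proposition \ref{5hh24092}, Theorem \ref{5hh23101312}) only exploits that the factor is small when $[A]_{R_0}\le\delta$, so the fractional power is harmless. Two further small points: the bound $|A-\overline{A}_{B_R}(t)|\le 2\Lambda$ uses the matrix bound $|A|\le\Lambda$, which for nonsymmetric $A$ does not follow from the quadratic-form condition \eqref{5hhconda} alone and must be taken as part of the standing hypotheses; and in the covering step you should take the cylinders $Q_\rho(y,s)$ with $s=t_0$ so that they stay inside $Q_{2R}$ up to the common top time, which is admissible for the interior parabolic estimate of Lemma \ref{111120147}.
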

       \begin{proof} The proof can be found in \cite[Lemma 7.3]{55QH2}. 
       \end{proof}
       \begin{proposition}\label{5hh24092} There holds $$v\in  L^2(t_0-R^2,t_0;H^1(B_{R}))\cap L^\infty(t_0-\frac{1}{4}R^2,t_0;W^{1,\infty}(B_{R/2})), $$ 
       	and
       \begin{equation}\label{5hh18092}
       ||\nabla v||^{s}_{L^\infty(Q_{R/2})}\leq C \fint_{Q_{2R}}|\nabla u|^{s} dxdt +C\fint_{Q_{2R}}|F|^{s} dxdt,
       \end{equation}
         \begin{eqnarray}
               \fint_{Q_R}|\nabla (u-v)|^{s}dxdt\leq C\left([A]_{R_0}\right)^{s}\fint_{Q_{2R}}|\nabla u|^{s}dxdt+ C\fint_{Q_{2R}}|F|^{s} dxdt,\label{5hh18093}
               \end{eqnarray}
               where  $C$ depends  only on $N,\Lambda,s.$\\
       \end{proposition}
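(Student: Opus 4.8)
The plan is to prove Proposition \ref{5hh24092} by chaining together the comparison estimates already available: $u$ is compared to $w = u - W$ (via the Gehring-type bound on $\nabla W$ from \eqref{1111201410}), $w$ is compared to $v$ (via Lemma \ref{5hh21101319}), and then the regularity of $v$ is obtained from the fact that $v$ solves a parabolic equation whose coefficient $\overline{A}_{B_R}(t)$ depends only on $t$. The first task is the interior Lipschitz regularity of $v$. Since $\overline{A}_{B_R}(t)$ is measurable in $t$ but independent of $x$, one can differentiate the equation \eqref{5hheq4} in the spatial directions: each $\partial_{x_i} v$ again solves a parabolic equation with the same $x$-independent coefficient, so the classical $L^2$ energy estimates plus De Giorgi--Nash--Moser (or the standard $L^\infty$ bound for such equations) on a slightly smaller cylinder give $\|\nabla v\|_{L^\infty(Q_{R/2})} \le C\left(\fint_{Q_R} |\nabla v|^2\,dxdt\right)^{1/2}$. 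This is where the structural hypothesis that $A$ has small BMO oscillation in $x$ is exploited: it is precisely what lets us freeze the $x$-variable and land on an equation with smooth-in-$x$ (indeed constant-in-$x$) coefficients.

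Next I would convert the $L^2$ average of $\nabla v$ on the right-hand side into quantities involving $\nabla w$ and then $\nabla u$ and $F$. By \eqref{5hh18091} we have $\int_{Q_R} |\nabla v|^2 \le C_2 \int_{Q_R}|\nabla w|^2$, and by Lemma \ref{111120147} applied on $Q_R \subset\subset Q_{2R}$ (after a routine covering/scaling argument to pass from $Q_{\rho/2}\subset\subset Q_\rho$ to the pair $Q_R, Q_{2R}$) we control $\left(\fint_{Q_R}|\nabla w|^2\right)^{1/2}$ by $C\fint_{Q_{2R}}|\nabla w|\,dxdt$. Since $w = u - W$, we have $\fint_{Q_{2R}}|\nabla w| \le \fint_{Q_{2R}}|\nabla u| + \fint_{Q_{2R}}|\nabla W|$, and \eqref{1111201410} together with Jensen's inequality bounds $\fint_{Q_{2R}}|\nabla W|$ by $C\left(\fint_{Q_{2R}}|F|^s\right)^{1/s}$. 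Assembling these and raising to the power $s$ yields \eqref{5hh18092}; here one uses $\left(\fint_{Q_{2R}}|\nabla u|\right)^s \le \fint_{Q_{2R}}|\nabla u|^s$ by Jensen when $s \ge 1$, and for $0 < s < 1$ a standard self-improving/interpolation argument (or simply replacing the intermediate $L^2$ Gehring exponent by a lower one, which Lemma \ref{111120147} permits) handles the small-exponent case.

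For the estimate \eqref{5hh18093} on $\nabla(u - v)$, I would write $u - v = (u - w) + (w - v) = W + (w-v)$ and estimate the two pieces separately. The term $\fint_{Q_R}|\nabla W|^s$ is directly bounded by $C\fint_{Q_{2R}}|F|^s$ from \eqref{1111201410} (adjusting the averaging domain from $Q_R$ to $Q_{2R}$ costs only a dimensional constant). For $\fint_{Q_R}|\nabla(w-v)|^s$, when $s \le 2$ Jensen gives $\fint_{Q_R}|\nabla(w-v)|^s \le \left(\fint_{Q_R}|\nabla(w-v)|^2\right)^{s/2}$, and \eqref{5hh18094} bounds the latter by $C\left([A]_{R_0}\right)^s\left(\fint_{Q_{2R}}|\nabla w|\right)^s$; expanding $\nabla w = \nabla u - \nabla W$ again and using \eqref{1111201410} produces the two terms on the right of \eqref{5hh18093}. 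For $s > 2$ one needs a higher-integrability version of \eqref{5hh18094}, which follows by the same Gehring/reverse-Hölder machinery underlying Lemma \ref{111120147} and Lemma \ref{5hh21101319} — this is the one point where a little extra care beyond citing \cite{55QH2} is required. I expect the main obstacle to be exactly this: cleanly handling the full range $0 < s < \infty$ uniformly, since the cited lemmas are stated with the $L^2$ exponent, so the proof must absorb both a self-improvement upward (for large $s$) and a Jensen step downward (for small $s$), keeping the constants dependent only on $N, \Lambda, s$.
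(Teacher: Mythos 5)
Your proposal follows essentially the same route as the paper's proof: standard interior regularity for the frozen-coefficient solution $v$ giving $\|\nabla v\|_{L^\infty(Q_{R/2})}\le C(\fint_{Q_R}|\nabla v|^2)^{1/2}$, then the comparison estimates \eqref{5hh18091} and \eqref{111120148} to pass to $\fint_{Q_{2R}}|\nabla w|$, and finally the decomposition $u-v=W+(w-v)$ combined with \eqref{1111201410} and \eqref{5hh18094} for \eqref{5hh18093}. The only difference is that you explicitly flag the exponent issues (Jensen fails in the needed direction for $s>2$, and the $s<1$ case needs care), which the paper glosses over entirely; your proposed remedy via the Gehring/reverse-H\"older self-improvement is the standard fix.
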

       \begin{proof} By standard interior regularity and inequality \eqref{111120148} in Lemma \ref{111120147} and \eqref{5hh18091} in Lemma \ref{5hh21101319} we have\begin{align*}
      ||\nabla v||_{L^\infty(Q_{R/2})}&\leq C \left(\fint_{Q_R}|\nabla v|^2dxdt\right)^{1/2}\\&\leq C \left(\fint_{Q_R}|\nabla w|^2dxdt\right)^{1/2}\\&\leq C \fint_{Q_{2R}}|\nabla w|dxdt.
       \end{align*}   
       Thus, from this and  inequality \eqref{1111201410}, we get \eqref{5hh18092}. \\
        On the other hand, applying \eqref{5hh18094} in Lemma \ref{5hh21101319}  yields  
        \begin{align*}
         \fint_{Q_R}|\nabla (u- v)|^{s}dxdt\leq C \fint_{Q_R}|\nabla (u- w)|^{s}dxdt+ C\left([A]_{R}\right)^{s}\fint_{Q_{2R}}|\nabla w|^{s} dxdt. 
        \end{align*}      
              Combining this with \eqref{1111201410}, we get \eqref{5hh18093}. The proof is complete. 
       \end{proof}  
       \subsection{Boundary Estimates}
       In this subsection, we focus on the corresponding estimates near the boundary. \\
      Throughout this subsection,  $\Omega$ is  a $(\delta/4,R_0)$-Lip domain  and $[A]_{R_0}\le \delta/4$ for  $\delta<\delta_0$. Let $x_0\in \partial\Omega$ be a boundary point and  $0<R<R_0$ and $t_0\in (0,T)$.
      Since, for any $\eta>0$, $B_{\frac{1}{8}}((0,...,\frac{1}{8}-\varepsilon))\cap B_1(0)$  is $(\eta,\eta_0)$-Lip domain for some $\varepsilon>0$ and $\eta_0>0$. Therefore,    there a ball $B$ of radius $R/8$ and $\varepsilon_1,\varepsilon_2>0$ depending only on $N$  such that $B_{\varepsilon_1 R}(x)\subset B\subset B_{R}(x)$ and $B\cap \Omega$ is  $(\delta,\varepsilon_2R)-$ Lip domain. 
      
       We set $\tilde{\Omega}_{R/8}=\tilde{\Omega}_{R/8}(x_0,t_0)=\left(\Omega\cap B \right)\times (t_0-(R/8)^2,t_0)$.  Since  $B\cap \Omega$ is  $(\delta_0,\varepsilon_2R)-$ Lip domain  and $[A]_{\varepsilon_2R}\le \delta_0$, we 
     apply Theorem  \ref{161120141} to $\Omega_T= \tilde{\Omega}_{R/8}$, $G=F$ and $q=s$, there exists a unique  very weak solution $W$ to 
     \begin{equation}\label{161120143b}
     \left\{
     \begin{array}
     [c]{l}%
     {W_{t}}-\operatorname{div}(A(x,t)\nabla W)=\operatorname{div}(F)~~\text{in }\tilde{\Omega}_{R/8},\\ 
     W=0~~~~~~~\text{on}~~
     \partial_p\tilde{\Omega}_{R/8},\\                          
     \end{array}
     \right.  
     \end{equation} 
     satisfying 
     \begin{equation}\label{141120145} ||\nabla W||_{L^{s}(\tilde{\Omega}_{R/8})}\leq C ||F||_{L^{s}(\tilde{\Omega}_{R/8})}
     \end{equation} 
     where $C$ depends  only on $N,\Lambda,s$. Note that the constant $(diam(\Omega)+T^{1/2})/R_0$ in Theorem \ref{161120141}  equals $\frac{3}{8\varepsilon_2}$ in this case.
         In what follows we extend $F$  by zero to $\left(\Omega\times (-\infty,T)\right)^c$, and $W$ by zero to $\mathbb{R}^{N+1}\backslash \tilde{\Omega}_{R/8}$.
         We now set $w=u-W$, so $w$ is a solution of 
       \begin{equation}
         \label{141120142}\left\{ \begin{array}{l}
           {w_t} - \operatorname{div}\left( {A(x,t,\nabla w)} \right) = 0 \;in\;\tilde{\Omega}_{R/8}, \\ 
           w = u\quad \quad on~~\partial_{p}\tilde{\Omega}_{R/8}. \\ 
           \end{array} \right.
         \end{equation}
        
         \begin{lemma}\label{141120141}
               There exist  a constant  $C>0$ depending only on $N,\Lambda$ such that the following estimate    
                \begin{equation}\label{141120143}
                \left(\fint_{Q_{\rho/2}(y,s)}|\nabla w|^{2} dxdt\right)^{\frac{1}{2}}\leq C\fint_{Q_{3\rho}(y,s)}|\nabla w| dxdt,
                \end{equation}
                 holds  for all  $Q_{3\rho}(z,s)\subset B\times (t_0-(R/8)^2,t_0) $.
         \end{lemma}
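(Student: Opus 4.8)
The plan is to establish a boundary version of the Gehring-type self-improving estimate, in direct analogy with Lemma \ref{111120147} but taking into account that $w$ vanishes on the lateral part of $\partial_p\tilde{\Omega}_{R/8}$. First I would fix a cylinder $Q_{3\rho}(z,s)\subset B\times(t_0-(R/8)^2,t_0)$ and distinguish two cases. If $Q_{3\rho}(z,s)\subset\subset \tilde{\Omega}_{R/8}$ lies in the interior of $\Omega\times(\cdot)$, then \eqref{141120143} is just the interior reverse Hölder inequality \eqref{111120148} from Lemma \ref{111120147} (with a harmless enlargement of the outer cylinder from $Q_\rho$ to $Q_{3\rho}$), so nothing new is needed. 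If instead $Q_{3\rho}(z,s)$ meets the lateral boundary $\partial\Omega\times(\cdot)$, pick a boundary point $\bar z\in\partial\Omega$ near $z$; since $B\cap\Omega$ is a $(\delta,\varepsilon_2 R)$-Lip domain and $Q_{3\rho}$ sits inside $B\times(\cdot)$, the geometry near $\bar z$ is flat up to the small Lipschitz constant $\delta$.

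The key step is the Caccioppoli inequality up to the flat boundary together with a Sobolev–Poincaré inequality that exploits the zero boundary values. Concretely, I would test the equation \eqref{141120142} with $\zeta^2 w$, where $\zeta$ is a standard cutoff in space-time adapted to the pair of concentric cylinders, and use the ellipticity bound \eqref{5hhconda} to get
\begin{equation*}
\sup_t\fint_{B}\zeta^2 w^2\,dx+\fint_{Q}|\nabla(\zeta w)|^2\,dxdt\leq \frac{C}{\rho^2}\fint_{Q'}|w|^2\,dxdt,
\end{equation*}
where the crucial point is that, because $w=u$ on the lateral boundary and $u$ itself vanishes on $\partial\Omega\times(0,T)$ in the trace sense, we may extend $w$ by zero across $\partial\Omega$ and hence apply the Sobolev–Poincaré inequality on full parabolic cylinders rather than only on interior ones. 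This converts the right-hand side $L^2$ norm of $w$ into an $L^{2\theta}$ norm of $\nabla w$ for some $\theta<1$ (using the parabolic Sobolev embedding with the $L^\infty_tL^2_x$ control just obtained), giving a reverse-Hölder inequality
\begin{equation*}
\left(\fint_{Q_{\rho/2}}|\nabla w|^2\,dxdt\right)^{1/2}\leq C\left(\fint_{Q_{3\rho}}|\nabla w|^{2\theta}\,dxdt\right)^{1/(2\theta)}.
\end{equation*}

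From here the standard argument — Gehring's lemma / the self-improving property of reverse Hölder inequalities, exactly as invoked for Lemma \ref{111120147} via \cite{55Nau,55DuzaMing} — upgrades this to the stated estimate with the $L^1$ average of $|\nabla w|$ on the right; one also interpolates to absorb the gap between exponent $2\theta$ and exponent $1$. The constant depends only on $N$ and $\Lambda$ since the Lipschitz constant of $B\cap\Omega$ is bounded by the fixed $\delta<\delta_0$ and the estimates are scale-invariant (the cylinder $Q_{3\rho}$ is at scale $\rho\le R/24$, comparable to the local flattening scale $\varepsilon_2 R$). The main obstacle is making the "extend $w$ by zero and apply Sobolev–Poincaré up to the boundary" step rigorous on a Lipschitz-flat, rather than genuinely flat, piece of $\partial\Omega$: one needs the boundary Caccioppoli and Poincaré inequalities to hold with constants independent of the (small) Lipschitz constant, which is where the smallness $\delta<\delta_0$ and the Reifenberg-flatness consequence of the Lipschitz condition are used; this is essentially contained in the references \cite{55QH2,55QH3} and I would cite those for the boundary Caccioppoli–Poincaré bounds rather than reprove them.
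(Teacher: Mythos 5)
Your outline is sound: the paper itself gives no proof of this lemma but simply cites \cite[Theorem 7.5]{55QH2}, and the argument you sketch (boundary Caccioppoli with test function $\zeta^2 w$, zero-extension of $w$ across $\partial\Omega\cap B$ justified by the vanishing trace, Sobolev--Poincar\'e without mean subtraction thanks to the positive density of $\Omega^c$, then Gehring self-improvement plus the interpolation step to reach the $L^1$ average) is precisely the standard route one expects behind that citation. The only caveat worth recording is that the heavy lifting — the boundary Caccioppoli and Sobolev--Poincar\'e inequalities with constants uniform in the small Lipschitz constant — is exactly what you end up citing from \cite{55QH2,55QH3}, so your proposal, like the paper, ultimately defers to the same external source rather than being self-contained.
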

         Above lemma was proved in \cite[Theorem 7.5]{55QH2}. 
  Next, we set $\rho=\varepsilon_1R(1-\delta)/8$ so that $0<\rho/(1-\delta)<\varepsilon_1R_0/8$. By the definition of Lipschiptz domains and $B_{\varepsilon_1 R}(x_0)\subset B$,  there exists a coordinate system $\{y_1,y_2,...,y_N\}$ with the
  origin $0\in\Omega$ such that in this coordinate system $x_0=(0,...,0,-\frac{\rho\delta}{4(1-\delta)})$ and $B_\rho(0)\subset B$, 
  \begin{equation*}
  B^+_\rho(0)\subset \Omega\cap B_\rho(0)\subset B_\rho(0)\cap \left\{y=(y_1,y_2,....,y_N):y_N>-\frac{\rho\delta}{2(1-\delta)}\right\}.
  \end{equation*}
  Since $\delta<1/4$, we have 
   \begin{equation}\label{5hh1610138}
    B^+_\rho(0)\subset \Omega\cap B_\rho(0)\subset B_\rho(0)\cap \{y=(y_1,y_2,....,y_N):y_N>-\rho\delta\},
    \end{equation}
    where $B^+_\rho(0):=B_\rho(0)\cap\{y=(y_1,y_2,...,y_N):y_N>0\}$.\\
    Furthermore we consider the unique solution
           \begin{equation*}
            v\in  L^2(t_0-\rho^2,t_0;H^1(\Omega\cap B_\rho(0)))
            \end{equation*}
            to the following equation
             \begin{equation}\label{5hh1610131}
             \left\{ \begin{array}{l}
                  {v_t} - \operatorname{div}\left( {\overline{A}_{B_{\rho}(0)}(t)\nabla v} \right) = 0 \;in\;\tilde{\Omega}_\rho(0),\\ 
                  v = w\quad \quad on~~\partial_{p}\tilde{\Omega}_\rho(0), \\ 
                  \end{array} \right.
             \end{equation}
              where $\tilde{\Omega}_\rho(0)=\left(\Omega\cap B_{\rho}(0)\right)\times (t_0-\rho^2,t_0)$.
              We put $v=w$ outside $\tilde{\Omega}_\rho(0)$. As Lemma \ref{5hh21101319} (see \cite[Lemma 2.8]{55QH3}) we have the following result. 
               \begin{lemma}\label{5hh1610139}  There exist positive constants $C_1=C_1(N,\Lambda)$ and  $C_2=C_2(\Lambda)$ such that  \begin{eqnarray}
                             \fint_{Q_{\rho}(0,t_0)}|\nabla (w-v)|^2dxdt\leq C_1 \left([A]_{R}\right)^2 \fint_{Q_{\rho}(0,t_0)}|\nabla w|^2dxdt, \label{5hh1610132}
                             \end{eqnarray}
                    and \begin{equation}\label{5hh1610133}
                             C_2^{-1} \int_{Q_{\rho}(0,t_0)}|\nabla v|^2dxdt\leq  \int_{Q_{\rho}(0,t_0)}|\nabla w|^2dxdt\leq  C_2\int_{Q_{\rho}(0,t_0)}|\nabla v|^2dxdt.
                              \end{equation}
                     \end{lemma}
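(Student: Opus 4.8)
### Proof proposal for Lemma \ref{5hh1610139}

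The plan is to follow exactly the strategy of Lemma \ref{5hh21101319} (and its interior analogue), now carried out on the half-ball-like geometry $\Omega\cap B_\rho(0)$, using the flatness inclusion \eqref{5hh1610138} to handle the boundary. Write $Q=Q_\rho(0,t_0)$ and $\widetilde\Omega_\rho=\widetilde\Omega_\rho(0)$. The equivalence \eqref{5hh1610133} is the easy half: since $v$ solves the equation with the frozen coefficient $\overline A_{B_\rho(0)}(t)$ with the same boundary/initial data as $w$ on $\partial_p\widetilde\Omega_\rho$, testing the equation for $v$ with $v$ itself (and with $w$) and using the ellipticity bounds \eqref{5hhconda} (which $\overline A_{B_\rho(0)}(t)$ inherits, with the same constant $\Lambda$, being an average of matrices each satisfying \eqref{5hhconda}) together with the energy estimate for $w$ (equation \eqref{141120142} with zero right-hand side) gives the two-sided comparison of $\int_Q|\nabla v|^2$ and $\int_Q|\nabla w|^2$, with $C_2$ depending only on $\Lambda$. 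This is the standard Caccioppoli/energy argument and needs no smallness of $[A]_{R}$.

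For \eqref{5hh1610132} I would set $z=w-v$, which vanishes on $\partial_p\widetilde\Omega_\rho$ and satisfies, in the weak sense on $\widetilde\Omega_\rho$,
\[
z_t-\operatorname{div}\big(\overline A_{B_\rho(0)}(t)\nabla z\big)=\operatorname{div}\Big(\big(A(x,t)-\overline A_{B_\rho(0)}(t)\big)\nabla w\Big),
\]
because $w$ solves $w_t-\operatorname{div}(A\nabla w)=0$. Testing with $z$ and using ellipticity on the left and Cauchy–Schwarz on the right yields
\[
\int_Q|\nabla z|^2\,dxdt\ \le\ C(\Lambda)\int_{\widetilde\Omega_\rho}|A(x,t)-\overline A_{B_\rho(0)}(t)|^2\,|\nabla w|^2\,dxdt.
\]
Now one estimates the right-hand integral. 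Split $|A-\overline A_{B_\rho(0)}|^2\le 2\Lambda\,|A-\overline A_{B_\rho(0)}|$ (using the uniform bound $|A|,|\overline A|\le\Lambda$), so it suffices to control $\int |A-\overline A_{B_\rho(0)}(t)|\,|\nabla w|^2$. Here I would use the higher-integrability of $\nabla w$ from the boundary Gehring lemma (Lemma \ref{141120141}): $\nabla w\in L^{2+\sigma}_{\mathrm{loc}}$ with a reverse-Hölder estimate on cylinders inside $B\times(t_0-(R/8)^2,t_0)$, and $Q_\rho(0,t_0)\subset\subset$ that set since $\rho=\varepsilon_1R(1-\delta)/8<\varepsilon_1R/8$. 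Then Hölder's inequality with exponents $\frac{2+\sigma}{\sigma}$ and $\frac{2+\sigma}{2}$ gives
\[
\fint_Q|A-\overline A_{B_\rho(0)}(t)|\,|\nabla w|^2
\le\Big(\fint_Q|A-\overline A_{B_\rho(0)}(t)|^{\frac{2+\sigma}{\sigma}}\Big)^{\frac{\sigma}{2+\sigma}}\Big(\fint_Q|\nabla w|^{2+\sigma}\Big)^{\frac{2}{2+\sigma}}.
\]
Again bounding $|A-\overline A|^{\frac{2+\sigma}{\sigma}}\le (2\Lambda)^{\frac{2+\sigma}{\sigma}-1}|A-\overline A|$ and observing that, by \eqref{5hh1610138}, $Q$ differs from the full parabolic cylinder over $B_\rho(0)$ only by a set of relative measure $O(\delta)$ on which the integrand is bounded, one reduces $\fint_Q|A-\overline A_{B_\rho(0)}(t)|$ to $\lesssim \fint_{Q_\rho(0,t_0)}|A(x,t)-\overline A_{B_\rho(0)}(t)|\,dxdt + C\delta \le C([A]_{R}+ \delta)\le C[A]_{R_0}$, using the definition of $[A]_{R_0}$ with $r=\rho\le R_0$ and that $\delta\le\delta_0\le [A]_{R_0}$ may be absorbed (or, more cleanly, one simply keeps the bound in the form $C([A]_{R})^2$ as stated, since $\delta$ was chosen comparably small). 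Finally the reverse-Hölder factor $\big(\fint_Q|\nabla w|^{2+\sigma}\big)^{2/(2+\sigma)}$ is controlled by $C\fint_{Q_{2\rho}\text{-type cylinder}}|\nabla w|^2$, which by \eqref{5hh1610133}-type energy comparison on the slightly larger cylinder is $\le C\fint_{Q_\rho(0,t_0)}|\nabla w|^2$ (after a standard covering/enlargement, all cylinders staying inside $B\times(t_0-(R/8)^2,t_0)$). Combining, $\fint_Q|\nabla z|^2\le C_1([A]_R)^2\fint_Q|\nabla w|^2$, which is \eqref{5hh1610132}.

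The main obstacle is the bookkeeping around the domain $\Omega\cap B_\rho(0)$ rather than a clean half-ball: one must verify that the test function $z=w-v$ is admissible (i.e.\ $z\in L^2(t_0-\rho^2,t_0;H^1_0(\Omega\cap B_\rho(0)))$ with the correct initial trace), that the boundary Gehring estimate of Lemma \ref{141120141} genuinely applies on cylinders centered at boundary points of $\Omega\cap B_\rho(0)$ up to the relevant scale, and that the flatness $y_N>-\rho\delta$ from \eqref{5hh1610138} lets one pass freely between integrals over $Q_\rho(0,t_0)$ and over the ``flattened'' cylinders with only an error absorbable into the $C_1([A]_R)^2$ constant. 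Since \cite[Lemma 2.8]{55QH3} proves precisely this statement in the analogous setting, the argument above is the expected one and the details are routine; I would simply cite \cite[Lemma 2.8]{55QH3} and sketch the two tests (for \eqref{5hh1610133}) and (for \eqref{5hh1610132}) as above.
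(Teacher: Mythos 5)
The paper offers no proof of this lemma beyond the citation to \cite[Lemma 2.8]{55QH3} (mirroring Lemma \ref{5hh21101319}, whose proof is likewise referred to \cite[Lemma 7.3]{55QH2}), so your decision to cite the same reference and sketch the standard freezing argument is consistent with the paper, and your overall strategy --- test the equation for $z=w-v$ with $z$, use ellipticity and Cauchy--Schwarz, then combine the $L^\infty$ bound on $A$ with the reverse H\"older inequality for $\nabla w$ and the smallness of the mean oscillation --- is indeed the expected one. The energy-comparison argument for \eqref{5hh1610133} is also fine.

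Two steps of your sketch for \eqref{5hh1610132}, however, do not deliver the inequality exactly as stated. First, the chain $|A-\overline{A}|^{(2+\sigma)/\sigma}\le C|A-\overline{A}|$ followed by H\"older produces the factor $([A]_R)^{\sigma/(2+\sigma)}$, not $([A]_R)^2$; since $[A]_R<1$ this is strictly weaker than what you then assert. To reach the power $2$ one needs a John--Nirenberg-type self-improvement of the BMO condition (so that $\fint|A-\overline{A}|^p\le C_p([A]_{R_0})^p$), or one should simply note that any positive power of $[A]_{R_0}$ suffices for the way the lemma is used in Proposition \ref{5hh16101310}. Second, and more seriously, your final reduction of the reverse-H\"older factor $\bigl(\fint_{Q_{c\rho}}|\nabla w|^{2+\sigma}\bigr)^{2/(2+\sigma)}$ to $C\fint_{Q_\rho(0,t_0)}|\nabla w|^2$ invokes an ``energy comparison on the slightly larger cylinder''; no such comparison exists, since an average of $|\nabla w|^2$ over a larger parabolic cylinder cannot in general be controlled by the average over a smaller concentric one (Caccioppoli goes the other way). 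The honest output of your argument is \eqref{5hh1610132} with a dilated cylinder on the right-hand side, exactly as in the interior analogue \eqref{5hh18094} (where the right side is over $Q_{2R}$ while the left is over $Q_R$), and that is also the form actually used later in the proof of Proposition \ref{5hh16101310}, where the right-hand side is $\fint_{Q_{\varepsilon_1 R}}|\nabla w|^s$. So the discrepancy is partly in the lemma's statement itself, but as written your last step is not justified.
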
              
                    We can see that if the boundary of $\Omega$ is irregular enough, then the $L^\infty$-norm of $\nabla v$ up to $\partial\Omega\cap B_\rho(0)\times (t_0-\rho^2,t_0)$ may not exist. However, we have the following Lemma obtained in \cite[Lemma 7.12]{55QH2}.                                   
    \begin{lemma}\label{5hh21101314}
        For any $\varepsilon>0$, there exists a small $\delta_1=\delta_1(N,\Lambda,\varepsilon)\in (0,\delta_0)$ such that if  $\delta\in (0,\delta_1)$, there exists a function $V\in C(t_0-\rho^2,t_0;L^2( B_\rho^+(0)))\cap L^2(t_0-\rho^2,t_0;H^1( B_\rho^+(0)))$ satisfying
        \begin{align*}
        ||\nabla V||^2_{L^\infty(Q_{\rho/4}(0,t_0))}\leq C \fint_{Q_{\rho}(0,t_0)} |\nabla v|^2dxdt, 
        \end{align*} 
        and 
 \begin{align*}
  \fint_{Q_{\rho/8}(0,t_0)}|\nabla (v-V)|^2dxdt\leq \varepsilon^2\fint_{Q_{\rho}(0,t_0)} |\nabla v|^2dxdt,
 \end{align*}
    for some $C=C(N,\Lambda)>0$.
        \end{lemma}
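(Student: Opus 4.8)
I would argue by compactness: as the Lipschitz constant $\delta$ of $\Omega\cap B_\rho(0)$ shrinks to $0$, the domain $\Omega\cap B_\rho(0)$ flattens to the half-ball $B_\rho^+(0)$ and $v$ converges to the solution $V$ of the corresponding problem in the \emph{flat} half-cylinder; since the coefficients $\overline A_{B_\rho(0)}(t)$ do not depend on the space variable, this limit $V$ is Lipschitz in $x$ up to the flat face $\{y_N=0\}$ (linear functions of $y_N$ are solutions, which forces linear growth of $V$ off $\{y_N=0\}$, and interior gradient estimates then bound $\nabla V$ up to the face), and both displayed estimates follow. By parabolic rescaling $(x,t)\mapsto(\rho x,\rho^2 t)$ and translation I first reduce to $\rho=1$, $t_0=0$ (this preserves $\Lambda$), and by homogeneity I normalise $\fint_{Q_1(0,0)}|\nabla v|^2\,dx\,dt=1$ (the case $\nabla v\equiv0$ being trivial, with $V\equiv0$); so $v$ solves $\partial_t v-\operatorname{div}(\overline A(t)\nabla v)=0$ in $(\Omega\cap B_1)\times(-1,0)$, vanishes on $(\partial\Omega\cap B_1)\times(-1,0)$, and by \eqref{5hh1610138} the boundary piece $\partial\Omega\cap B_1$ lies in the slab $\{-\delta\le y_N\le0\}$, with $B_1^+\subset\Omega\cap B_1\subset B_1\cap\{y_N>-\delta\}$.

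Suppose the lemma fails. Then there are $\varepsilon_0>0$, numbers $\delta_k\downarrow0$, domains $\Omega_k$, matrices $A_k(t)$ with $\Lambda^{-1}|\xi|^2\le\langle A_k(t)\xi,\xi\rangle\le\Lambda|\xi|^2$ (which, being $x$-independent, may be taken symmetric without changing $\operatorname{div}(A_k\nabla\cdot\,)$), and functions $v_k$ as above with $\fint_{Q_1}|\nabla v_k|^2=1$, such that no $V$ meets both conclusions with $\varepsilon=\varepsilon_0$ and $C$ the universal constant found below. Fixing $\eta>0$ and the sub-cylinder $\mathcal U_\eta:=(B_{3/4}\cap\{y_N>\eta\})\times(-1/2,0)$, for $\delta_k<\eta$ this lies inside $\Omega_k$, so interior Caccioppoli and gradient estimates (cf.\ Lemma \ref{111120147}, Proposition \ref{5hh24092}) bound $v_k$ in $L^2(H^1)$ and $\partial_t v_k$ in $L^2(H^{-1})$ on slightly smaller cylinders; moreover the Gehring-type reverse Hölder inequality (Lemma \ref{141120141}), valid up to the almost-flat boundary because the Lipschitz constants are uniformly small, gives $\|\nabla v_k\|_{L^{q_0}}\le C$ for a fixed $q_0=q_0(N,\Lambda)>2$, so $\{|\nabla v_k|^2\}$ is equi-integrable. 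By Aubin--Lions and a diagonal argument in $\eta\downarrow0$, a subsequence satisfies $v_k\to v_\infty$ strongly in $L^2_{\mathrm{loc}}(B_1^+\times(-1,0))$, $\nabla v_k\rightharpoonup\nabla v_\infty$ weakly, and $A_k\to A_\infty$ weakly-$*$ in $L^\infty$; the a priori bound $\int_{\{y_N=0\}\cap B_1\times(-1,0)}|v_k|^2\le\delta_k\int_{Q_1}|\nabla v_k|^2\to0$, from $v_k(y',0,t)=\int_{\Gamma_k(y')}^{0}\partial_{y_N}v_k\,dy_N$ and $v_k=0$ on $\partial\Omega_k$, together with the uniform $H^1$-bound up to $\{y_N=0\}$, forces $v_\infty=0$ on $\{y_N=0\}\cap B_1\times(-1,0)$. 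A standard Caccioppoli argument — testing the equations for $v_k$ and $v_\infty$ against $(v_k-v_\infty)\zeta^2$ — upgrades the gradient convergence to strong in $L^2_{\mathrm{loc}}(B_1^+\times(-1,0))$, so one may pass to the limit in the weak formulation: $v_\infty$ solves $\partial_t v_\infty-\operatorname{div}(A_\infty(t)\nabla v_\infty)=0$ in $B_1^+\times(-1,0)$, $v_\infty=0$ on $\{y_N=0\}\cap B_1\times(-1,0)$, with $\fint_{Q_{1/2}^+}|\nabla v_\infty|^2\le C(N)$ by weak lower semicontinuity.

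By the flat-boundary regularity noted in the first paragraph, $\|\nabla v_\infty\|_{L^\infty(Q_{1/4}^+)}\le C(\fint_{Q_{1/2}^+}|\nabla v_\infty|^2)^{1/2}\le C_0=C_0(N,\Lambda)$. Extending $v_\infty$ by $0$ to $\{y_N\le0\}$ (legitimate, since it is Lipschitz and vanishes on $\{y_N=0\}$, so $\nabla V=0$ there) and setting $V:=v_\infty$, the first conclusion holds with $C=C_0$; for the second, the strong $L^2_{\mathrm{loc}}$ convergence of $\nabla v_k$ on $\{y_N>0\}$ makes $\int_{Q_{1/8}\cap\{y_N>0\}}|\nabla(v_k-V)|^2\to0$, equi-integrability makes $\int_{Q_{1/8}\cap\{-\delta_k<y_N\le0\}\cap\Omega_k}|\nabla v_k|^2\to0$ (the strip shrinks), and elsewhere in $Q_{1/8}\cap\{y_N\le0\}$ both $v_k$ and $V$ vanish; hence $\fint_{Q_{1/8}}|\nabla(v_k-V)|^2\to0<\varepsilon_0^2=\varepsilon_0^2\fint_{Q_1}|\nabla v_k|^2$ for $k$ large, contradicting the choice of the sequence. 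This yields the lemma with $C=C_0$ and $\delta_1$ the threshold at which the contradiction arises.

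\textbf{Main obstacle.} The delicate point is the compactness step with \emph{varying} domains: since $v_k$ solves the equation only in $\Omega_k\cap B_1$ and is pasted to $w_k$ outside, a naïve extension to a fixed ball produces an uncontrolled boundary-layer contribution to $\partial_t v_k$, so one must work on cylinders interior to the limiting half-space and recover the flat Dirichlet condition in the limit from the a priori trace bound; and one needs genuine \emph{strong} $L^2_{\mathrm{loc}}$ convergence of $\nabla v_k$ (via Caccioppoli, not merely weak compactness), both to identify $A_\infty\nabla v_\infty$ as the limit of $A_k\nabla v_k$ and to obtain the $\varepsilon_0$-closeness. The only other care needed is the bookkeeping on the thin strip $\{-\delta_k<y_N\le0\}$, dispatched by the uniform higher integrability of $\nabla v_k$ available for domains with small Lipschitz constant.
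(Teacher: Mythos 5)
The paper gives no proof of this lemma at all --- it is imported verbatim from \cite[Lemma 7.12]{55QH2} --- so the benchmark is the standard compactness proof in the Byun--Wang tradition, which is exactly the route you take: rescale and normalise, argue by contradiction along $\delta_k\downarrow 0$, extract a limit solving a frozen-coefficient equation in the flat half-cylinder, invoke Lipschitz regularity up to the flat face, and contradict the $\varepsilon_0$-separation. Your handling of the varying domains (working on the fixed half-cylinder $B_{3/4}^+\times(-1/2,0)\subset(\Omega_k\cap B_1)\times(-1,0)$, recovering the Dirichlet condition on $\{y_N=0\}$ from the trace bound of order $\delta_k$) and of the thin strip $\{-\delta_k<y_N\le 0\}$ via uniform higher integrability is sound.

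There is, however, one genuine gap: the limit passage in the coefficients. You only have $A_k\to A_\infty$ weak-$*$ in $L^\infty$, and you claim that ``a standard Caccioppoli argument --- testing the equations for $v_k$ and $v_\infty$ against $(v_k-v_\infty)\zeta^2$'' upgrades to strong $L^2_{\mathrm{loc}}$ convergence of gradients. As ordered this is circular: you test against the equation for $v_\infty$ with coefficient $A_\infty$ before having shown that $v_\infty$ solves it, which is precisely what the strong convergence is meant to deliver. Worse, even granting that equation, the expansion produces cross terms such as $\int (A_k-A_\infty)\nabla v_\infty\cdot\nabla(v_k-v_\infty)\,\zeta^2$, a product of two sequences converging only weakly to zero; in general the weak-$*$ limit of oscillating coefficients is \emph{not} the G-limit, so the step as written would fail for space--time dependent $A_k$. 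What rescues the argument is the special structure that $A_k$ depends only on $t$: terms of the form $\int A_k(t)\,\Psi\cdot\nabla v_k\,\zeta^2$ with $\Psi$ smooth can be integrated by parts in $x$, converting $\nabla v_k$ into $v_k$, which converges strongly in $L^2$ by Aubin--Lions. One first obtains the energy convergence $\int A_k\nabla v_k\cdot\nabla v_k\,\zeta^2\to\int\sigma\cdot\nabla v_\infty\,\zeta^2$ (with $\sigma$ the weak limit of the fluxes $A_k\nabla v_k$), then deduces $\limsup_k\int|\nabla(v_k-v_\infty)|^2\zeta^2\le 0$ by ellipticity after approximating $\nabla v_\infty$ by smooth fields, and only afterwards identifies $\sigma=A_\infty\nabla v_\infty$; equivalently, a div--curl argument using that $\nabla v_k$ is curl-free and $A_k$ is $x$-independent. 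You should make this explicit. A smaller point: the $L^\infty$ gradient bound up to the flat face for $\partial_t V-\operatorname{div}(A(t)\nabla V)=0$ with $A$ merely measurable in $t$ is true but does not follow from ``linear functions of $y_N$ are solutions''; it requires differentiating tangentially and recovering $\partial_{y_N}V$ from the equation, and is itself one of the auxiliary lemmas of \cite{55QH2}, so it should be cited rather than gestured at.
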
       
       
\begin{proposition}\label{5hh16101310}For any $\varepsilon>0$ there exists a small $\delta_1=\delta_1(N,\Lambda,s,q_0,\varepsilon)\in (0,\delta_0)$ such that the following holds. If $\Omega$ is a $(\delta/4,R_0)$-Lip domain with $\delta\in (0,\delta_1)$, there is a function $V\in  L^2(t_0-(R/9)^2,t_0;H^1( B_{R/9}(x_0)))\cap L^\infty(t_0-(R/9)^2,t_0;W^{1,\infty}( B_{R/9}(x_0)))$ such that 
 \begin{equation}\label{5hh21101317}
 ||\nabla V||^s_{L^\infty(Q_{\varepsilon_1 R/500})}\leq C\fint_{Q_{R}}|\nabla u|^sdxdt+C \fint_{Q_{R}}|F|^sdxdt,
 \end{equation}
 and 
 \begin{align}
\fint_{Q_{\varepsilon_1 R/500}}|\nabla (u-V)|^sdxdt\leq C (\varepsilon^s+([A]_{R_0})^s)\fint_{Q_{R}}|\nabla u|^sdxdt+ C\fint_{Q_{R}}|F|^sdxdt,\label{5hh21101318}
 \end{align}
 for some $C=C(N,\Lambda,s)>0$. Here $Q_{\rho}=Q_{\rho}(x_0,t_0)$ for all $\rho>0$.
 \end{proposition}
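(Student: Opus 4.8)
The plan is to chain together the approximation results already established near the boundary, using the intermediate solutions $W$, $w$, $v$, $V$ from \eqref{161120143b}, \eqref{141120142}, \eqref{5hh1610131} and Lemma \ref{5hh21101314}. The candidate function in the proposition will be exactly the function $V$ produced by Lemma \ref{5hh21101314} (extended appropriately to a parabolic cylinder centered at $(x_0,t_0)$), so the two assertions \eqref{5hh21101317} and \eqref{5hh21101318} become quantitative consequences of the energy comparisons (i) $u-W=w$ with $W$ controlled by $F$ via \eqref{141120145}, (ii) $w\approx v$ via Lemma \ref{5hh1610139}, and (iii) $v\approx V$ together with the $L^\infty$ bound on $\nabla V$ from Lemma \ref{5hh21101314}. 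First I would fix the coordinate system and the radius $\rho=\varepsilon_1 R(1-\delta)/8$ as in the passage preceding \eqref{5hh1610138}, so that all the cited lemmas apply on $Q_\rho(0,t_0)$ and its sub-cylinders, and observe that $Q_{\varepsilon_1 R/500}(x_0,t_0)\subset Q_{\rho/8}(0,t_0)$ and $Q_{\rho}(0,t_0)\subset Q_{R/8}(x_0,t_0)\subset Q_R(x_0,t_0)$ once $\delta$ is small (these are the routine inclusions that let one pass between the two centers $x_0$ and $0$).

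For the $L^\infty$ estimate \eqref{5hh21101317}: start from Lemma \ref{5hh21101314}, which gives $\|\nabla V\|^2_{L^\infty(Q_{\rho/4}(0,t_0))}\le C\fint_{Q_\rho(0,t_0)}|\nabla v|^2$; then apply the second inequality in \eqref{5hh1610133} to replace $|\nabla v|$ by $|\nabla w|$, then the Gehring-type bound \eqref{141120143} of Lemma \ref{141120141} (on a suitable cylinder $Q_{3\rho'}\subset B\times(t_0-(R/8)^2,t_0)$) to pass from the $L^2$ average of $\nabla w$ to its $L^1$ average, then Jensen to go back up to an $L^s$ average, and finally $w=u-W$ together with \eqref{141120145} to arrive at $C\fint_{Q_R}|\nabla u|^s+C\fint_{Q_R}|F|^s$. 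Raising to the power $s/2$ and adjusting constants and cylinder radii (so that $Q_{\varepsilon_1 R/500}\subset Q_{\rho/4}(0,t_0)$) yields \eqref{5hh21101317}.

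For the comparison estimate \eqref{5hh21101318}: split $\nabla(u-V)=\nabla(u-W)+\nabla(W-v)+\nabla(v-V)$ — better, $\nabla(u-V)=\nabla w + \nabla(v-w)-\nabla v + \ldots$; cleanly, write $u-V=(u-W)-(W-w)$ is trivial since $w=u-W$, so $u-V = w-V = (w-v)+(v-V)$, and on the cylinder $Q_{\varepsilon_1 R/500}$ one has $u=w+W$, hence $\fint|\nabla(u-V)|^s\le C\fint|\nabla(w-v)|^s+C\fint|\nabla(v-V)|^s+C\fint|\nabla W|^s$. The first term is handled by \eqref{5hh1610132} (plus Gehring \eqref{141120143} and Jensen to convert the $L^2$ average into an $L^s$ average of $\nabla w$, then $\nabla w=\nabla u-\nabla W$); it contributes $C([A]_{R_0})^s\fint_{Q_R}|\nabla u|^s+C\fint_{Q_R}|F|^s$. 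The second term is handled by the second inequality of Lemma \ref{5hh21101314}, which gives $\varepsilon^2\fint_{Q_\rho(0,t_0)}|\nabla v|^2$; convert this as before into $C\varepsilon^s\fint_{Q_R}|\nabla u|^s+C\varepsilon^s\fint_{Q_R}|F|^s$ using \eqref{5hh1610133}, Gehring, Jensen, and \eqref{141120145}. The third term is directly $C\fint_{Q_R}|F|^s$ by \eqref{141120145}. Adding up and choosing $\delta_1$ small enough that Lemma \ref{5hh21101314} applies with the given $\varepsilon$ gives \eqref{5hh21101318}.

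The main technical nuisance — rather than a deep obstacle — is bookkeeping the geometry: the inner solutions $v,V$ live on cylinders centered at the shifted origin $0$ with radius $\rho=\varepsilon_1R(1-\delta)/8$, while the conclusion is stated on $Q_{\varepsilon_1 R/500}(x_0,t_0)$ and in terms of averages over $Q_R(x_0,t_0)$; one must verify all the needed inclusions hold uniformly for $\delta\in(0,\delta_1)$ and that each invocation of the Gehring lemma \ref{141120141} is on a cylinder compactly contained in $B\times(t_0-(R/8)^2,t_0)$. A secondary point is the repeated passage between $L^2$ and $L^s$ averages: for $s\le 2$ this is just Hölder/Jensen, while for $s>2$ one needs the self-improving (reverse-Hölder) strength of the Gehring estimate, which \eqref{141120143} supplies since it bounds an $L^2$ average by an $L^1$ average on a larger cylinder. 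Once these are in place, \eqref{5hh21101317} and \eqref{5hh21101318} follow by assembling the displayed chains and absorbing constants into $C=C(N,\Lambda,s)$ and the threshold $\delta_1=\delta_1(N,\Lambda,s,q_0,\varepsilon)$.
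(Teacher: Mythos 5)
Your proposal matches the paper's own proof: the same function $V$ from Lemma \ref{5hh21101314}, the same chain of cylinder inclusions between the shifted origin and $(x_0,t_0)$, and the same three-term decomposition $u-V=W+(w-v)+(v-V)$ estimated via \eqref{141120145}, \eqref{5hh1610132}--\eqref{5hh1610133}, the Gehring bound \eqref{141120143} with Jensen, and Lemma \ref{5hh21101314}. (The intermediate slip ``$u-V=w-V$'' is harmless, since you immediately correct it to reinstate the $\nabla W$ contribution.)
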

    \begin{proof} We can assume that $\delta\in (0,1/100) $. So
\begin{equation}\label{5hh090520141}
Q_{\varepsilon_1 R/500}\subset Q_{\rho/8}(0,t_0)\subset Q_{6\rho}(0,t_0)\subset Q_{\varepsilon_1 R}\subset Q_{R}
\end{equation}                                                        
    By Lemma  \ref{5hh21101314} for any $\varepsilon>0$, we can find a small positive $\delta=\delta(N,\Lambda,s,q_0,\varepsilon)<1/100$ such that there is a function $V\in  L^2(t_0-\rho^2,t_0;H^1( B_{\rho}(0)))\cap L^\infty(t_0-\rho^2,t_0;W^{1,\infty}( B_{\rho}(0)))$ satisfying 
    \begin{align*}
   &  ||\nabla V||^2_{L^\infty(Q_{\rho/4}(0,t_0))}\leq C \fint_{Q_{\rho}(0,t_0)} |\nabla v|^2dxdt,\\&
   \fint_{Q_{\rho/8}(0,t_0)}|\nabla (v-V)|^2\leq \varepsilon^2\fint_{Q_{\rho}(0,t_0)} |\nabla v|^2dxdt.
    \end{align*}    
        Then, by \eqref{5hh1610133} in Lemma  \ref{5hh1610139} and \eqref{141120143} in Lemma \ref{141120141} and \eqref{5hh090520141},  we get 
        \begin{align}
        \nonumber
                 ||\nabla V||^s_{L^\infty(Q_{\varepsilon_1 R/500})} &\leq C \left(\fint_{Q_{\rho}(0,t_0)} |\nabla w|^2dxdt\right)^{\frac{s}{2}}\nonumber\\&\leq C\fint_{Q_{\varepsilon_1R}} |\nabla w|^sdxdt, \label{5hh21101316}
        \end{align}       
        and 
        \begin{align}
                \fint_{Q_{\varepsilon_1 R/500}}|\nabla (v-V)|^{s}dxdt\leq C \varepsilon^s\fint_{Q_{\varepsilon_1R}} |\nabla w|^sdxdt.\label{5hh21101320}
        \end{align}
        Therefore, from \eqref{141120145} and \eqref{5hh21101316} we get \eqref{5hh21101317}.\\
         Next we prove \eqref{5hh21101318}. Since \eqref{5hh090520141}, 
         \begin{align*}
         &\fint_{Q_{\varepsilon_1 R/500}}|\nabla (u-V)|^sdxdt\leq C\fint_{Q_{\rho/8}(0,t_0)}|\nabla (u-V)|^sdxdt
                    \\&~~~~~~~~\leq C\fint_{Q_{\rho/8}(0,t_0)}|\nabla (u- w)|^sdxdt+C\fint_{Q_{\rho/8}(0,t_0)}|\nabla (w- v)|^sdxdt\\&~~~~~~~~~~+C\fint_{Q_{\rho/8}(0,t_0)}|\nabla (v- V)|^sdxdt.
         \end{align*}
   Using \eqref{141120145} and \eqref{5hh1610132}, \eqref{5hh1610133} in Lemma \ref{5hh1610139} and \eqref{5hh21101320} we find that \begin{align*}
           &\fint_{Q_{\rho/8}(0,t_0)}|\nabla (u-w)|^sdxdt\leq C \fint_{Q_{R}}|F|^sdxdt,
           \\&\fint_{Q_{\rho/8}(0,t_0)}|\nabla( v- w)|^sdxdt \leq C([A]_{R_0})^s \fint_{Q_{\varepsilon_1R}}|\nabla w|^sdxdt\\&~~~~~~~~~~~~~~~~~~~~~~~~~~~~~~~~~~\leq  C([A]_{R_0})^s\left(\fint_{Q_{ R}}|\nabla u|^sdxdt+\fint_{Q_{R}}|F|^sdxdt\right),
           \end{align*}
           and 
           \begin{align*}           
          \fint_{Q_{\rho/8}(0,t_0)}|\nabla (v-V)|^s dxdt \leq C\varepsilon^s\left(\fint_{Q_{ R}}|\nabla u|^sdxdt+\fint_{Q_{R}}|F|^sdxdt\right).
           \end{align*}                                                
           Then we derive \eqref{5hh21101318}.  This completes the proof.

    \end{proof}  
    \section{Global integral gradient bounds for parabolic equations }
    The following good-$\lambda$ type estimate will be essential for our global estimates later. 
  \begin{theorem}\label{5hh23101312} Let $w\in \mathbf{A}_\infty$, $F\in L^s(\Omega_T,\mathbb{R}^N), s>1$.   For any $\varepsilon>0,R_0>0$ one finds  $\delta_1=\delta_1(N,\Lambda,s,\varepsilon,[w]_{\mathbf{A}_\infty})\in (0,1/2)$ and $\delta_2=\delta_2(N,\Lambda,s,\varepsilon,[w]_{\mathbf{A}_\infty},T_0/R_0)\in (0,1)$ and $\Lambda_0=\Lambda_0(N,\Lambda,s)>0$ such that if $\Omega$ is  a $(\delta_1,R_0)$-Lip domain and $[A]_{R_0}\le \delta_1$ then there exists a unique solution $u\in L^s(0,T;W^{1,s}_0(\Omega))$  to equation \eqref{5hh070120148} satisfying 
   \begin{equation}\label{5hh16101311}
   w(\{\mathcal{M}(|\nabla u|^s)>\Lambda_0\lambda, \mathcal{M}(|F|^s)\le \delta_2\lambda \}\cap \Omega_T)\le C\varepsilon w(\{ \mathcal{M}(|\nabla u|^s)> \lambda\}\cap \Omega_T)
   \end{equation}
   for all $\lambda>0$, 
      where the constant $C$  depends only on $N,\Lambda,s, T_0/R_0, [w]_{\mathbf{A}_\infty}$.
  \end{theorem}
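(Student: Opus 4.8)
The plan is to establish the good-$\lambda$ inequality \eqref{5hh16101311} by a covering argument built on the interior and boundary comparison estimates of Section 2, in the spirit of the Caffarelli--Peral / Wang iteration. First I would fix $s>1$ and apply Theorem \ref{161120141} (as already noted after its statement) to obtain, for a suitable threshold $\delta_0=\delta_0(N,\Lambda,s)$, the existence of the unique very weak solution $u\in L^s(0,T;W^{1,s}_0(\Omega))$ with $\|\nabla u\|_{L^s(\Omega_T)}\le C\|F\|_{L^s(\Omega_T)}$; we work with this $u$, extended by the convention $F=0$ and $u$ the corresponding solution on $\Omega\times(-\infty,0)$, so that the cylinders in Section 2 make sense near $t=0$. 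We may also normalize so that $w(\{\mathcal{M}(|\nabla u|^s)>\lambda\}\cap\Omega_T)>0$ and, after rescaling $\lambda$, assume the relevant exceptional set is nonempty.

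The core is a Vitali-type covering. Set $E_\lambda=\{\mathcal{M}(|\nabla u|^s)>\Lambda_0\lambda,\ \mathcal{M}(|F|^s)\le\delta_2\lambda\}\cap\Omega_T$ and $D_\lambda=\{\mathcal{M}(|\nabla u|^s)>\lambda\}\cap\Omega_T$. For each $(x,t)\in E_\lambda$ I would choose, by the standard continuity/density argument, a maximal parabolic cylinder $Q_r(x,t)$ (radius $r\lesssim R_0$, forced small by choosing $\Lambda_0$ large relative to $\fint_{\Omega_T}|\nabla u|^s$) with $\fint_{Q_r}\mathbf{1}_{\Omega_T}|\nabla u|^s\,dyds=\lambda$, and apply Vitali to extract a countable disjoint subfamily $\{Q_{r_i}(x_i,t_i)\}$ whose $5$-dilates cover $E_\lambda$. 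On each such cylinder I distinguish the interior case ($B_{?r_i}(x_i)\subset\subset\Omega$) and the boundary case, and apply Proposition \ref{5hh24092} respectively Proposition \ref{5hh16101310} to produce the comparison function $V$ with $\|\nabla V\|^s_{L^\infty}\lesssim\lambda$ (using the maximal-function bound on the enlarged cylinder together with the stopping condition $\fint\le\lambda$) and $\fint|\nabla(u-V)|^s\lesssim(\varepsilon^s+\delta_1^s)\lambda$ (using $[A]_{R_0}\le\delta_1$ and $\mathcal{M}(|F|^s)\le\delta_2\lambda$ at the center). Then the weak-$(1,1)$ bound for $\mathcal{M}$ gives
\begin{align*}
\bigl|\{\mathcal{M}(|\nabla u|^s)>\Lambda_0\lambda\}\cap Q_{r_i}\bigr|
&\le \bigl|\{\mathcal{M}(\mathbf{1}_{Q_{5r_i}}|\nabla(u-V)|^s)>\tfrac{\Lambda_0}{2}\lambda\}\bigr|
+\bigl|\{\mathcal{M}(\mathbf{1}_{Q_{5r_i}}|\nabla V|^s)>\tfrac{\Lambda_0}{2}\lambda\}\bigr|\\
&\le \frac{C}{\Lambda_0\lambda}\int_{Q_{5r_i}}|\nabla(u-V)|^s\,dyds+0
\le C(\varepsilon^s+\delta_1^s)\,|Q_{r_i}|,
\end{align*}
provided $\Lambda_0=\Lambda_0(N,\Lambda,s)$ is chosen to absorb the $L^\infty$ bound on $\nabla V$. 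This is the purely Lebesgue-measure density estimate $|E_\lambda\cap Q_{r_i}|\le C(\varepsilon^s+\delta_1^s)|Q_{r_i}|$.

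To pass from Lebesgue measure to the weight $w$ I would invoke the $\mathbf{A}_\infty$ property: $w(E_\lambda\cap Q_{r_i})\le C[w]_{\mathbf{A}_\infty}\bigl(|E_\lambda\cap Q_{r_i}|/|Q_{r_i}|\bigr)^\nu w(Q_{r_i})\le C(\varepsilon^s+\delta_1^s)^\nu w(Q_{r_i})$. Each stopping cylinder satisfies $\fint_{Q_{r_i}}\mathbf{1}_{\Omega_T}|\nabla u|^s=\lambda$, hence the center (and by continuity a positive-measure portion) of $Q_{r_i}$ lies in $D_\lambda$; combined with the doubling of $w$ and disjointness of the $Q_{r_i}$ this yields $\sum_i w(Q_{r_i})\le C\,w(D_\lambda)$. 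Summing over $i$ and choosing first $\varepsilon$ (hence $\delta_1$ small enough that $C(\varepsilon^s+\delta_1^s)^\nu\le\varepsilon$, after relabeling) gives \eqref{5hh16101311}. The dependence of $\delta_1$ only on $N,\Lambda,s,\varepsilon,[w]_{\mathbf{A}_\infty}$ (not on $R_0$) comes from Propositions \ref{5hh24092}, \ref{5hh16101310}, whose constants are scale-invariant; the extra dependence of $\delta_2$ and $C$ on $T_0/R_0$ enters only through the global solvability step (Theorem \ref{161120141}) and the initial normalization of $\lambda$.

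The main obstacle I expect is bookkeeping at the boundary: one must verify that for a stopping cylinder $Q_{r_i}(x_i,t_i)$ centered at an interior point but close to $\partial\Omega$, there is a genuine boundary point $\bar x_i\in\partial\Omega$ with $Q_{5r_i}(x_i,t_i)$ contained in the cylinder $Q_R(\bar x_i,\bar t_i)$ on which Proposition \ref{5hh16101310} is stated, with $R$ comparable to $r_i$ and the geometric constants $\varepsilon_1,\varepsilon_2$ under control; the small-dilation factors $\varepsilon_1R/500$ there mean one has to be careful about which enlarged cylinder carries the maximal-function information $\mathcal{M}(|\nabla u|^s)(x_i,t_i)\le\Lambda_0\lambda$ and $\mathcal{M}(|F|^s)(x_i,t_i)\le\delta_2\lambda$. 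A secondary technical point is handling cylinders that hit the initial slab $t=0$, which is why $u$ and $F$ were extended to negative times; there the interior/boundary parabolic estimates apply verbatim since $F\equiv0$ for $t<0$.
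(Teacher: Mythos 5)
Your proposal is correct in substance and uses the same analytic core as the paper --- the interior comparison of Proposition \ref{5hh24092}, the boundary comparison of Proposition \ref{5hh16101310}, the weak $(1,1)$ bound for $\mathcal{M}$ to obtain a Lebesgue-measure density estimate on each cylinder, and the $\mathbf{A}_\infty$ condition to convert it into a weighted one --- but the covering mechanism is different. You run a Calder\'on--Zygmund/Krylov--Safonov exit-time selection: stopping cylinders with $\fint|\nabla u|^s=\lambda$, a Vitali subfamily, and a summation over disjoint cylinders whose dilates cover the exceptional set. The paper instead invokes Lemma \ref{5hhvitali2}: it fixes a cover of $\Omega_T$ by cylinders of a single radius $r_0\sim R_0$, checks the global smallness $w(E_{\lambda,\delta_2})\le\varepsilon w(\tilde Q_{r_0}(y_i,s_j))$ directly from the $L^s$ energy estimate of Theorem \ref{161120141}, and then verifies the contrapositive density criterion: if $\tilde Q_r\cap\Omega_T\not\subset F_\lambda$ and $E_{\lambda,\delta_2}\cap\tilde Q_r\ne\emptyset$, then $w(E_{\lambda,\delta_2}\cap\tilde Q_r)<\varepsilon w(\tilde Q_r)$. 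In the paper's route the points carrying the information $\mathcal{M}(|\nabla u|^s)\le\lambda$ and $\mathcal{M}(|F|^s)\le\delta_2\lambda$ are supplied by the contrapositive hypothesis rather than by a stopping condition, which sidesteps the non-monotonicity of $\rho\mapsto\fint_{\tilde Q_\rho}|\nabla u|^s$ in the exit-radius selection; your route avoids stating and proving the covering lemma but pays in bookkeeping (the density estimate must be stated on the dilated cylinders that actually cover $E_{\lambda,\delta_2}$, and the stopping cylinders only sit inside $\{\mathcal{M}(|\nabla u|^s)\ge c\lambda\}$ for a dimensional $c<1$, so constants must be re-absorbed into $\Lambda_0$).

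The one step you should repair is the claim that the stopping radii are ``forced small by choosing $\Lambda_0$ large relative to $\fint_{\Omega_T}|\nabla u|^s$'': $\Lambda_0$ may depend only on $N,\Lambda,s$, so it cannot be used to control the data, and for small $\lambda$ the exit radius can genuinely exceed $R_0$, outside the range of the local comparison estimates. The correct mechanism --- the one the paper uses to verify \eqref{5hh2310131} --- is that $E_{\lambda,\delta_2}\ne\emptyset$ forces $\int_{\Omega_T}|F|^s\le C|Q_{2T_0}|\delta_2\lambda$, hence by the energy estimate $\fint_{\tilde Q_\rho}|\nabla u|^s\le C\delta_2(T_0/\rho)^{N+2}\lambda<\lambda$ for all $\rho\gtrsim R_0$ once $\delta_2$ is chosen small depending on $T_0/R_0$; this is precisely where the dependence of $\delta_2$ on $T_0/R_0$ enters. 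With that substitution, and the boundary bookkeeping you already flagged (matching your $5r_i$-dilates with the cylinders $Q_{5000r/\varepsilon_1}(x_3,t_0)$ on which Proposition \ref{5hh16101310} is applied), your argument closes.
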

  To prove above estimate, we will use  L. Caddarelli and I. Peral's technique in \cite{CaPe}. Namely, it is based on the following technical lemma whose proof is  a consequence of Lebesgue Differentiation Theorem and the standard Vitali covering  lemma, can be found in  \cite{55BW4,55MePh2} with some modifications to fit the setting here.

       \begin{lemma}\label{5hhvitali2} Let $\Omega$ be a $(\delta,R_0)$-Reifenberg flat domain with $\delta<1/4$ and let $w$ be an $\mathbf{A}_\infty$ weight. Suppose that the sequence of balls $\{B_r(y_i)\}_{i=1}^L$ with centers $y_i\in\overline{\Omega}$ and  radius $r\leq R_0/4$ covers $\Omega$. Set $s_i=T-ir^2/2$ for all $i=0,1,...,[\frac{2T}{r^2}]$. Let $E\subset F\subset \Omega_T$ be measurable sets for which there exists $0<\varepsilon<1$ such that  $w(E)<\varepsilon w(\tilde{Q}_r(y_i,s_j))$ for all $i=1,...,L$, $j=0,1,...,[\frac{2T}{r^2}]$; and  for all $(x,t)\in \Omega_T$, $\rho\in (0,2r]$, we have
               $\tilde{Q}_\rho(x,t)\cap \Omega_T\subset F$      
               if $w(E\cap \tilde{Q}_\rho(x,t))\geq \varepsilon w(\tilde{Q}_\rho(x,t))$. Then $
               w(E)\leq \varepsilon Bw(F)$         
               for a constant $B$ depending only on $N$ and $[w]_{\mathbf{A}_\infty}$.
              \end{lemma}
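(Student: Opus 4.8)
The argument follows the stopping-radius scheme of Caffarelli--Peral and Byun--Wang carried out with respect to the measure $w\,dxdt$; once the covering is arranged correctly the statement is, up to the modifications described below, the one in \cite{55BW4,55MePh2}. I will use repeatedly that an $\mathbf{A}_\infty$ weight $w$ makes $w\,dxdt$ a doubling measure on $\mathbb{R}^{N+1}$ (with respect to the parabolic cylinders $\tilde Q_\rho$), with doubling constant $C_d$ depending only on $N$ and $[w]_{\mathbf{A}_\infty}$, and the quantitative $\mathbf{A}_\infty$ comparison
$$\frac{|A|}{|Q|}\le A_0\left(\frac{w(A)}{w(Q)}\right)^{\gamma}\qquad\text{for every cylinder }Q=\tilde Q_\rho(x,t)\text{ and every measurable }A\subset Q,$$
where $A_0,\gamma>0$ depend only on $[w]_{\mathbf{A}_\infty}$; see \cite{55Gra}. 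We may assume $w(E)>0$.

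First I would fix a stopping radius at $w$-almost every point of $E$. From the global hypothesis: given $(x,t)\in\Omega_T$, choose $i,j$ with $(x,t)\in\tilde Q_r(y_i,s_j)$; since $|x-y_i|<r$ and $|t-s_j|<r^2/2$ one checks $\tilde Q_r(y_i,s_j)\subset\tilde Q_{2r}(x,t)\subset\tilde Q_\rho(x,t)$ for all $\rho\ge 2r$, hence $w(E\cap\tilde Q_\rho(x,t))\le w(E)<\varepsilon\,w(\tilde Q_r(y_i,s_j))\le\varepsilon\,w(\tilde Q_\rho(x,t))$ for all $\rho\ge 2r$. On the other hand, by the Lebesgue differentiation theorem for the doubling measure $w\,dxdt$, for $w$-a.e. $(x,t)\in E$ one has $w(E\cap\tilde Q_\rho(x,t))/w(\tilde Q_\rho(x,t))\to 1$ as $\rho\to0^+$. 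Since $\rho\mapsto w(\tilde Q_\rho(x,t))$ and $\rho\mapsto w(E\cap\tilde Q_\rho(x,t))$ are continuous and strictly positive, for such $(x,t)$ the number
$$\rho_{x,t}:=\sup\Big\{\rho\in(0,2r]:\ w\big(E\cap\tilde Q_\rho(x,t)\big)\ge\varepsilon\,w\big(\tilde Q_\rho(x,t)\big)\Big\}$$
is well defined and lies in $(0,2r)$; by continuity $w(E\cap\tilde Q_{\rho_{x,t}}(x,t))=\varepsilon\,w(\tilde Q_{\rho_{x,t}}(x,t))$, while combining the definition of $\rho_{x,t}$ with the previous observation gives $w(E\cap\tilde Q_{\rho}(x,t))<\varepsilon\,w(\tilde Q_{\rho}(x,t))$ for every $\rho>\rho_{x,t}$. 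In particular $\tilde Q_{\rho_{x,t}}(x,t)$ satisfies the density condition of the hypothesis (and has radius $\le 2r$), so $\tilde Q_{\rho_{x,t}}(x,t)\cap\Omega_T\subset F$.

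Next I would run a Vitali selection. The cylinders $\tilde Q_{\rho_{x,t}}(x,t)$, for $(x,t)$ in the full-$w$-measure set $E_0\subset E$ where the above construction works, have radii $\le 2r$, so by the Vitali covering lemma applied in the parabolic quasi-metric there is a countable disjoint subfamily $Q_k:=\tilde Q_{\rho_k}(x_k,t_k)$ whose dilates $\tilde Q_{c_0\rho_k}(x_k,t_k)$, with $c_0$ a dimensional constant, cover $E_0$, hence cover $E$ up to a $w$-null set (one uses here the elementary engulfing property of parabolic cylinders). Since $c_0\rho_k>\rho_k$, the previous paragraph yields $w(E\cap\tilde Q_{c_0\rho_k}(x_k,t_k))<\varepsilon\,w(\tilde Q_{c_0\rho_k}(x_k,t_k))$ for every $k$, whence
$$w(E)\le\sum_k w\big(E\cap\tilde Q_{c_0\rho_k}(x_k,t_k)\big)<\varepsilon\sum_k w\big(\tilde Q_{c_0\rho_k}(x_k,t_k)\big)\le\varepsilon\,C_d\sum_k w\big(\tilde Q_{\rho_k}(x_k,t_k)\big).$$
Since $\Omega$ is $(\delta,R_0)$-Reifenberg flat with $\delta<1/4$, $(x_k,t_k)\in\Omega_T$ and $\rho_k\le 2r\le R_0/2$, the measure density property of Reifenberg flat domains gives $|\tilde Q_{\rho_k}(x_k,t_k)\cap\Omega_T|\ge c(N)\,|\tilde Q_{\rho_k}(x_k,t_k)|$, so by the $\mathbf{A}_\infty$ comparison $w(\tilde Q_{\rho_k}(x_k,t_k))\le C(N,[w]_{\mathbf{A}_\infty})\,w(\tilde Q_{\rho_k}(x_k,t_k)\cap\Omega_T)$. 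Finally the $Q_k$ are disjoint and $Q_k\cap\Omega_T\subset F$, so $\sum_k w(\tilde Q_{\rho_k}(x_k,t_k)\cap\Omega_T)\le w(F)$; combining the last three displays gives $w(E)\le\varepsilon\,B\,w(F)$ with $B$ depending only on $N$ and $[w]_{\mathbf{A}_\infty}$.

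The delicate point is the stopping-radius construction: one must ensure $\rho_{x,t}$ is well defined, strictly below $2r$, and --- crucially --- that the $E$-density of $\tilde Q_\rho(x,t)$ stays below $\varepsilon$ for \emph{all} $\rho>\rho_{x,t}$, since this is exactly what lets the density gain survive the enlargement from $Q_k$ to $\tilde Q_{c_0\rho_k}$ in the last step. A second, more routine point is the two-sided volume comparison $|\tilde Q_{\rho_k}(x_k,t_k)\cap\Omega_T|\ge c(N)\,|\tilde Q_{\rho_k}(x_k,t_k)|$: the spatial factor is the standard measure density estimate for $(\delta,R_0)$-Reifenberg flat domains with $\delta<1/4$, and the temporal factor follows from $t_k\in(0,T)$ together with the fact that the half-heights $\rho_k^2/2$ are controlled by the scale $r^2$ built into the covering; both are handled as in \cite{55BW1,55BW4,55QH3}.
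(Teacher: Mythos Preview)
Your proposal is correct and follows exactly the approach the paper indicates: the paper does not give a detailed argument for this lemma but only states that it ``is a consequence of Lebesgue Differentiation Theorem and the standard Vitali covering lemma, can be found in \cite{55BW4,55MePh2} with some modifications to fit the setting here,'' and your stopping-radius construction plus Vitali selection is precisely that argument carried out in the weighted parabolic setting. The only point worth tightening is the temporal measure-density estimate $|(t_k-\rho_k^2/2,t_k+\rho_k^2/2)\cap(0,T)|\ge c\,\rho_k^2$: your deferral to the references is acceptable, but note that it genuinely requires $\rho_k^2\le 4r^2\lesssim T$, which in the application (proof of Theorem~\ref{5hh23101312}) is guaranteed by the choice $r_0=\min\{R_0/1000,T_0\}$ and should be recorded as an implicit hypothesis on $r$ in the lemma.
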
   
  \begin{proof}[Proof of Theorem \ref{5hh23101312}] By Theorem \ref{161120141}, we find $\delta_0=\delta_0(N,\Lambda,s)$ then  there exists a unique solution $u$ to solution $u\in L^s(0,T;W^{1,s}_0(\Omega))$  to equation \eqref{5hh070120148} satisfying
  	\begin{align}\label{es-U}
  	\int_{\Omega_T}|\nabla u|^s dxdt\leq C 	\int_{\Omega_T}|F|^s dxdt
  	\end{align}
  	provided that  $\Omega$ is  a $(\delta,R_0)$-Lip domain and $[A]_{R_0}\le \delta$ for $\delta<\delta_0=\delta_0(N,\Lambda,s)$ and $R_0>0$. Let $\varepsilon \in (0,1)$.  Set $$E_{\lambda,\delta_2}=\{\mathcal{M}(|\nabla u|^s)>\Lambda_0\lambda, \mathcal{M}(|F|^s)\leq \delta_2\lambda\}\cap \Omega_T $$ and $$F_\lambda=\{\mathcal{M}(|\nabla u|^s)>\lambda \}\cap \Omega_T$$ for $\delta_2\in (0,1),\Lambda>0$ and $\lambda>0$.
    Let $\{y_i\}_{i=1}^L\subset \Omega$ and a ball $B_0$ with radius $2T_0$ such that 
   $$
    \Omega\subset \bigcup\limits_{i = 1}^L {{B_{r_0}}({y_i})}  \subset {B_0},$$
    where $r_0=\min\{R_0/1000,T_0\}$. Let 
    $s_j=T-jr_0^2/2$ for all $j=0,1,...,[\frac{2T}{r_0^2}]$ and $Q_{2T_0}=B_0\times (T-4T_0^2,T)$. So,
    \begin{equation*}
        \Omega_T\subset \bigcup\limits_{i,j} {{Q_{r_0}}({y_i,s_j})}  \subset {Q_{2T_0}}.
        \end{equation*} 
   We verify that
   \begin{equation}\label{5hh2310131}
   w(E_{\lambda,\delta_2})\leq \varepsilon w({\tilde{Q}_{r_0}}({y_i,s_j})) ~~\forall ~\lambda>0,
   \end{equation}
   for some $\delta_2$ small enough depending on $N,s,\epsilon,[w]_{\mathbf{A}_\infty},T_0/R_0$.\\
   In fact, we can assume that $E_{\lambda,\delta_2}\not=\emptyset$,  so $\int_{\Omega_T}|F|^sdxdt\leq  C|Q_{2T_0}|\delta_2\lambda$. Since $\mathcal{M}$ is a bounded operator from $L^1(\mathbb{R}^{N+1})$ into $L^{1,\infty}(\mathbb{R}^{N+1})$ and \eqref{es-U} we get 
   \begin{align*}
   |E_{\lambda,\delta_2}|& \leq \frac{C}{\Lambda\lambda}\int_{\Omega_T}|\nabla u|^sdxdt\\& \leq 
   \frac{C}{\Lambda\lambda}\int_{\Omega_T}|F|^sdxdt\\&
   \leq C\delta_2 |Q_{2T_0}|,
   \end{align*}
  which implies
 \begin{align*}
    w(E_{\lambda,\delta_2})\leq c\left(\frac{|E_{\lambda,\delta_2}|}{|Q_{2T_0}|}\right)^\nu w(Q_{2T_0})\leq C\delta_2^\nu w(Q_{2T_0}),
   \end{align*}
   where $(c,\nu)=[w]_{\mathbf{A}_\infty}$. It is well-known that (see, e.g \cite{55Gra}) there exist $c_1=c_1(N,c,\nu)$ and $\nu_1=\nu_1(N,c,\nu)$ such that 
   \begin{align*}
   \frac{w(Q_{2T_0})}{w({\tilde{Q}_{r_0}}({y_i,s_j}))}\leq c_1\left(\frac{|Q_{2T_0}|}{|{\tilde{Q}_{r_0}}({y_i,s_j})|}\right)^{\nu_1}~~\forall i,j.
   \end{align*}
  Therefore,
  \begin{align*}
      w(E_{\lambda,\delta_2})\leq C\delta_2^\nu c_1\left(\frac{|Q_{2T_0}|}{|{\tilde{Q}_{r_0}}({y_i,s_j})|}\right)^{\nu_1} w({\tilde{Q}_{r_0}}({y_i,s_j}))
      < \varepsilon w({\tilde{Q}_{r_0}}({y_i,s_j}))~~\forall ~i,j,
     \end{align*}
     for  $\delta_2$ small enough depending on $N,s,\epsilon,[w]_{\mathbf{A}_\infty},T_0/R_0$. Thus \eqref{5hh2310131} follows.\\
  Next we verify that for all $(x,t)\in \Omega_T$, $r\in (0,2r_0]$ and $\lambda>0$ we have
    $
     \tilde{Q}_r(x,t)\cap \Omega_T\subset F_\lambda
     $
     provided $$
        w(E_{\lambda,\delta_2}\cap \tilde{Q}_r(x,t))\geq \varepsilon w(\tilde{Q}_r(x,t)),
      $$
     for some $\delta_2$ small enough depending on $N,s,\epsilon,[w]_{\mathbf{A}_\infty},T_0/R_0$. 
        Indeed,
  take $(x,t)\in \Omega_T$ and $0<r\leq 2r_0$, we set $$\tilde{Q}_\rho=\tilde{Q}_\rho(x,t)~~\forall \rho>0.$$
             Now assume that $\tilde{Q}_r\cap \Omega_T\cap F^c_\lambda\not= \emptyset$ and $E_{\lambda,\delta_2}\cap \tilde{Q}_r\not = \emptyset$ i.e, there exist $(x_1,t_1),(x_2,t_2)\in \tilde{Q}_r\cap \Omega_T$ such that $\mathcal{M}(|\nabla u|^s)(x_1,t_1)\leq \lambda$ and $\mathcal{M}(|F|^s)(x_2,t_2)\le \delta_2 \lambda$.              
              We need to prove that
              \begin{equation}\label{5hh2310133}
                     w(E_{\lambda,\delta_2}\cap \tilde{Q}_r)< \varepsilon w(\tilde{Q}_r). 
                                      \end{equation}
         Using $\mathcal{M}(|\nabla u|^2)(x_1,t_1)\leq \lambda$, we can see that
                                      \begin{equation*}
                                      \mathcal{M}(|\nabla u|^s)(y,t')\leq \max\left\{\mathcal{M}\left(\chi_{\tilde{Q}_{2r}}|\nabla u|^s\right)(y,t'),3^{N+2}\lambda\right\}~~\forall (y,t')\in \tilde{Q}_r.
                                      \end{equation*}
           Therefore, for all $\lambda>0$ and $\Lambda_0\geq 3^{N+2}$,
           \begin{eqnarray}\label{5hh2310134}E_{\lambda,\delta_2}\cap \tilde{Q}_r=\left\{\mathcal{M}\left(\chi_{\tilde{Q}_{2r}}|\nabla u|^s\right)>\Lambda_0\lambda, \mathcal{M}(|F|^s)\leq \delta_2\lambda\right\}\cap \Omega_T \cap \tilde{Q}_r.
           \end{eqnarray}
           In particular, $E_{\lambda,\delta_2}\cap \tilde{Q}_r=\emptyset$ if $\overline{B}_{8r}(x)\subset\subset \mathbb{R}^{N}\backslash \Omega$.
           Thus, it is enough to consider the case $B_{8r}(x)\subset\subset\Omega$ and the case $B_{8r}(x)\cap\Omega\not=\emptyset$.\\   
           First assume  $B_{8r}(x)\subset\subset\Omega$. Let $v$ be as in Theorem \ref{5hh24092} with $Q_{2R}=Q_{8r}(x,t_0)$ and  $t_0=\min\{t+2r^2,T\}$. We have  
           \begin{equation}\label{5hh2310135}
                  ||\nabla v||^s_{L^\infty(Q_{2r}(x,t_0))}\leq C \fint_{Q_{8r}(x,t_0)}|\nabla u|^s dxdt +C\fint_{Q_{8r}(x,t_0)}|F|^s dxdt,
                  \end{equation}
                  and 
                  \begin{align*}
              \fint_{Q_{4r}(x,t_0)}|\nabla (u- v)|^sdxdt\leq  C\fint_{Q_{8r}(x,t_0)}|F|^s dxdt+ C([A]_{R_0})^s\fint_{Q_{8r}(x,t_0)}|\nabla u|^sdxdt.
                  \end{align*}
                  Here constants $C$ in above two  depend only $N,\Lambda,s$. \\
  Thanks to $\mathcal{M}(|\nabla u|^s)(x_1,t_1)\leq \lambda$ and $\mathcal{M}(|F|^s)(x_2,t_2)\le \delta_2 \lambda$ with $(x_1,t_1),(x_2,t_2)\in Q_r(x,t)$, we find $Q_{8r}(x,t_0)\subset\tilde{Q}_{17r}(x_1,t_1),\tilde{Q}_{17r}(x_2,t_2) $ and 
  \begin{align}\nonumber
  ||\nabla v||^s_{L^\infty(Q_{2r}(x,t_0))}&\leq C\fint_{\tilde{Q}_{17r}(x_1,t_1)}|\nabla u|^s dxdt +C\fint_{\tilde{Q}_{17r}(x_2,t_2)}|F|^s dxdt\\&\nonumber\leq
  C(1+\delta_2)\lambda\\&\leq
  C\lambda,\label{1411201410}
  \end{align}
and 
\begin{align}\nonumber
\fint_{Q_{4r}(x,t_0)}|\nabla (u- v)|^2dxdt&\leq  C\delta_2\lambda+ C([A]_{R_0})^s\lambda\\&\leq 
C(\delta_2+\delta_1^s)\lambda.\label{1411201411}
\end{align}                                                   Here we used $[A]_{R_0}\leq \delta_1$ in the last inequality. \\                        
    In view of \eqref{1411201410},  we have that for $\Lambda_0\geq \max\{3^{N+2},2C\}$, $C$ is the constant in \eqref{1411201410}.
 \begin{align*}                           |\{\mathcal{M}\left(\chi_{\tilde{Q}_{2r}}|\nabla v|^s\right)>\Lambda_0\lambda/4\}\cap \tilde{Q}_r|=0.
                           \end{align*}
                  It follows that 
\begin{align*}
|E_{\lambda,\delta_2}\cap \tilde{Q}_r|&\leq   |\{\mathcal{M}\left(\chi_{\tilde{Q}_{2r}}|\nabla (u- v)|^s\right)>\Lambda_0\lambda/4\}\cap \tilde{Q}_r|.                                                 
\end{align*}                           
Therefore, $\mathcal{M}$ is a bounded operator from $L^1(\mathbb{R}^{N+1})$ into $L^{1,\infty}(\mathbb{R}^{N+1})$ and \eqref{1411201411}, $\tilde{Q}_{2r}\subset Q_{4r}(x,t_0)$ we deduce
\begin{align*}
|E_{\lambda,\delta_2}\cap \tilde{Q}_r|&\leq  \frac{C}{\lambda}\int_{\tilde{Q}_{2r}} |\nabla (u-v)|^sdxdt  \\&\leq  C \left(\delta_2+\delta_1^s\right)|\tilde{Q}_r|. 
\end{align*}
Thus,  
\begin{align*}
w(E_{\lambda,\delta_2}\cap \tilde{Q}_r)&\leq c\left(\frac{|E_{\lambda,\delta_2}\cap \tilde{Q}_r |}{|\tilde{Q}_r|}\right)^\nu w(\tilde{Q}_r)
    \\&\leq  C\left(\delta_2+\delta_1^2\right)^\nu w(\tilde{Q}_r)
    \\&< \varepsilon w(\tilde{Q}_r).
\end{align*} 
    where $\delta_2,\delta_1\leq \delta(N,\Lambda,s,\varepsilon,[w]_{\mathbf{A}_\infty})$  and  $(c,\nu)=[w]_{\mathbf{A}_\infty}$.\\
    Next assume $B_{8r}(x)\cap\Omega\not=\emptyset$. Let $x_3\in\partial \Omega$ such that $|x_3-x|=\text{dist}(x,\partial\Omega)$. Set $t_0=\min\{t+2r^2,T\}$. We have 
    
    \begin{equation}\label{5hh2310138}
    Q_{2r}(x,t_0)\subset Q_{10r}(x_3,t_0)\subset Q_{5000r/\varepsilon_1}(x_3,t_0) \subset \tilde{Q}_{10^4r/\varepsilon_1}(x_3,t)\subset \tilde{Q}_{10^5r}(x,t)\subset \tilde{Q}_{10^6r}(x_1,t_1),
    \end{equation}
    and 
    \begin{equation}\label{5hh2310139}
    Q_{5000r/\varepsilon_1}(x_3,t_0) \subset \tilde{Q}_{10^4r/\varepsilon_1}(x_3,t)\subset \tilde{Q}_{10^5r}(x,t)\subset \tilde{Q}_{10^6r}(x_2,t_2)
        \end{equation}
   Applying  Theorem 
    \ref{5hh16101310} with $Q_{R}=Q_{5000r/\varepsilon_1}(x_3,t_0)$ and $\varepsilon=\delta_3\in (0,1)$, there exists a constant $\delta_0'=\delta_0'(N,\Lambda,s,\delta_3)\in (0,\delta_0)$ such that if $\Omega$ is  a $(\delta_0',R_0)$-Lip domain then 
    \begin{equation*}
     ||\nabla V||^s_{L^\infty(Q_{10r}(x_3,t_0))}\leq C\fint_{Q_{5000r/\varepsilon_1}(x_3,t_0)}|\nabla u|^sdxdt+C\fint_{Q_{5000r/\varepsilon_1}(x_3,t_0)}|F|^sdxdt,
     \end{equation*}
     and 
     \begin{align*}
     \nonumber&\fint_{Q_{10r}(x_3,t_0)}|\nabla (u-V)|^sdxdt\\&~~~~\leq C (\delta_3^s+[A]_{R_0}^s)\fint_{Q_{5000r/\varepsilon_1}(x_3,t_0)}|\nabla u|^sdxdt+ C\fint_{Q_{5000r/\varepsilon_1}(x_3,t_0)}|F|^sdxdt.
     \end{align*}      
Since $\mathcal{M}(|\nabla u|^s)(x_1,t_1)\leq \lambda$,  $\mathcal{M}(|F|^s)(x_2,t_2)\le \delta_2 \lambda$ and \eqref{5hh2310138}, \eqref{5hh2310139} we get 
\begin{align}\nonumber
 ||\nabla V||^s_{L^\infty(Q_{10r}(x_3,t_0))}&\nonumber\leq C\fint_{\tilde{Q}_{10^6r}(x_1,t_1)}|\nabla u|^sdxdt+C \fint_{\tilde{Q}_{10^6r}(x_1,t_1)}|F|^sdxdt\\&\nonumber\leq
 C(1+\delta_2)\lambda
 \\&\leq C\lambda,\label{1411201412"}
\end{align}
and 
\begin{align}
     \nonumber&\fint_{Q_{10r}(x_3,t_0)}|\nabla (u-V)|^sdxdt\leq C \left(\delta_3^s+([A]_{R_0})^s+ \delta_2\right)\lambda\\&\leq C\left(\delta_3^s+\delta_1^s+\delta_2\right)\lambda.\label{1411201412}
     \end{align} 
  Notice that we have used  $[A]_{R_0}\leq \delta_1$ in the last inequality.\\  
  As above we also have  that for $\Lambda_0\geq \max\{3^{N+2},4C\}$, the constant $C$ is in \eqref{1411201412"}.
 \begin{align*}
 |E_{\lambda,\delta_2}\cap \tilde{Q}_r|&\leq   |\{\mathcal{M}\left(\chi_{\tilde{Q}_{2r}}|\nabla (u- V)|^s\right)>\Lambda_0\lambda/4\}\cap \tilde{Q}_r|.                         
 \end{align*}     
 Note that the constant $\Lambda_0$ depends only on $N,\Lambda,s$.          \\          
 Therefore using \eqref{1411201412} we obtain 
 \begin{align*}
 |E_{\lambda,\delta_2}\cap \tilde{Q}_r|&\leq  \frac{C}{\lambda}\int_{\tilde{Q}_{2r}} |\nabla (u- V)|^sdxdt \\&\leq  C \left(\delta_3^s+\delta_1^s+ \delta_2\right)|\tilde{Q}_r|. 
 \end{align*}
 Thus
 \begin{align*}
  w(E_{\lambda,\delta_2}\cap \tilde{Q}_r)&\leq c\left(\frac{|E_{\lambda,\delta_2}\cap \tilde{Q}_r|}{|\tilde{Q}_r|}\right)^\nu w(\tilde{Q}_r)
      \\&\leq  C\left(\delta_3^s+\delta_1^s+\delta_2\right)^\nu w(\tilde{Q}_r)
      \\&< \varepsilon w(\tilde{Q}_r),
 \end{align*}   
     where $\delta_1,\delta_2,\delta_3\leq \delta'(N,\Lambda,s,\varepsilon,[w]_{\mathbf{A}_\infty})$ and  $(c,\nu)=[w]_{\mathbf{A}_\infty}$.\\  
    Therefore,  for all $(x,t)\in \Omega_T$, $r\in (0,2r_0]$ and $\lambda>0$, if  
                $$w(E_{\lambda,\delta_2}\cap \tilde{Q}_r(x,t))\geq \varepsilon w(\tilde{Q}_r(x,t)),$$                
then $$ \tilde{Q}_r(x,t)\cap \Omega_T\subset F_\lambda,$$ 
         where $\Omega$ is  a $(\delta_1,R_0)$-Lip domain and  $[A]_{R_0}\le \delta_1$ with  $\delta_1=\delta_1(N,\Lambda,s,\varepsilon,[w]_{\mathbf{A}_\infty})\in (0,\delta_0)$, $\delta_2=\delta_2(N,\Lambda,s,\varepsilon,[w]_{\mathbf{A}_\infty},T_0/R_0)\in (0,1)$. Hence, combining this with \eqref{5hh2310131}, we can apply Lemma \ref{5hhvitali2} to get the result.
              \end{proof}\medskip\\
\begin{proof}[Proof of Theorem \ref{101120143}] Let $F\in L^{q,p}_w(\Omega_T)$. Since $L^{q,p}_w(\Omega_T)\subset L^{q_0}(\Omega_T)$ for some $q_0>1$.  By Theorem \ref{161120141}, we find $\delta_0=\delta_0(N,\Lambda,q,p)$ then  there exists a unique very weak solution $u$ to solution $u\in L^{q_0}(0,T;W^{1,q_0}_0(\Omega))$  to equation \eqref{5hh070120148} satisfying
	\begin{align*}
	\int_{\Omega_T}|\nabla u|^{q_0} dxdt\leq C 	\int_{\Omega_T}|F|^{q_0} dxdt
	\end{align*}
	provided that  $\Omega$ is  a $(\delta,R_0)$-Lip domain and $[A]_{R_0}\le \delta$ for $\delta<\delta_0=\delta_0(N,\Lambda,q,p)$.\\
	By Theorem \ref{5hh23101312}, for any $s\in (0,q_0), \varepsilon>0,R_0>0$ one finds  $\delta=\delta(N,\Lambda,\varepsilon,s,[w]_{\mathbf{A}_\infty})\in (0,\delta_0)$ and $\delta_2=\delta_2(N,\Lambda,\varepsilon,s,[w]_{\mathbf{A}_\infty},T_0/R_0)\in (0,1)$ and $\Lambda_0=\Lambda_0(N,\Lambda)>0$ such that if $\Omega$ is  a $(\delta,R_0)$- Lip domain and $[A]_{R_0}\le \delta$ then 
   \begin{equation}\label{1411201413}
   w(\{\mathcal{M}(|\nabla u|^s)>\Lambda_0\lambda, \mathcal{M}[|F|^s]\le \delta_2\lambda \}\cap \Omega_T)\le C\varepsilon w(\{ \mathcal{M}(|\nabla u|^s)> \lambda\}\cap \Omega_T),
   \end{equation}
   for all $\lambda>0$, 
      where the constant $C$  depends only on $N,\Lambda,s, T_0/R_0, [w]_{\mathbf{A}_\infty}$.
      Thus, for $s<\infty,$
      \begin{align*}
& ||\mathcal{M}(|\nabla u|^s)||_{L^{q_1,p_1}_w(\Omega_T)}^{p_1}
 =q_1\Lambda_0^{p_1}\int_{0}^{\infty}\lambda^{p_1}\left(w(\{\mathcal{M}(|\nabla u|^s)>\Lambda_0\lambda \}\cap \Omega_T)\right)^{p_1/q_1}\frac{d\lambda}{\lambda} \\&~~~\leq 
q_1\Lambda_0^{p_1}2^{p_1/q_1}(C\varepsilon)^{p_1/q_1}\int_{0}^{\infty}\lambda^{p_1}\left(w(\{\mathcal{M}(|\nabla u|^s)>\lambda \}\cap \Omega_T)\right)^{p_1/q_1}\frac{d\lambda}{\lambda} 
\\&~~~~+  q_1\Lambda_0^{p_1}2^{p_1/q_1}\int_{0}^{\infty}\lambda^{p_1}\left(w(\{\mathcal{M}(|F|^s)>\delta_2\lambda \}\cap \Omega_T)\right)^{p_1/q_1}\frac{d\lambda}{\lambda}
\\&~~~ = \Lambda_0^{p_1}2^{p_1/q_1}(C\varepsilon)^{p_1/q_1}||\mathcal{M}(|\nabla u|^s)||_{L^{q_1,p_1}_w(\Omega_T)}^{p_1}+\Lambda_0^{p_1}2^{p_1/q_1}\delta_2^{-p_1}||\mathcal{M}(|F|^s)||_{L^{q_1,p_1}_w(\Omega_T)}^{p_1}.
      \end{align*}
It implies
\begin{align*}
||\mathcal{M}(|\nabla u|^s)||_{L^{q_1,p_1}_w(\Omega_T)}&\leq 2^{1/p_1}\Lambda_02^{1/q_1}(C\varepsilon)^{1/q_1}||\mathcal{M}(|\nabla u|^2)||_{L^{q_1,p_1}_w(\Omega_T)}\\&~~~+2^{1/p_1}\Lambda_0 2^{1/q_1}\delta_2^{-1}||\mathcal{M}(|F|^s)||_{L^{q_1,p_1}_w(\Omega_T)}
\end{align*}
and this inequalities  is also true when $p_1=\infty$. \\
We can choose $\varepsilon=\varepsilon(N,\Lambda,q_1,p_1,C)>0$ such that   $2^{1/p_1}\Lambda2^{1/q_1}(C\varepsilon)^{1/q_1}\leq 1/2$, then we get 
\begin{align}\label{es-U'}
||\mathcal{M}(|\nabla u|^s)||_{L^{q_1,p_1}_w(\Omega_T)}\leq C||\mathcal{M}(|F|^s)||_{L^{q_1,p_1}_w(\Omega_T)}
\end{align}
Let $w\in \mathbf{A}_q $, there exists $q_2=q_2(N,q,[w]_{\mathbf{A}_q})\in (1,q)$ such that $[w]_{\mathbf{A}_{q_2}}\leq C_0=C_0(N,q, [w]_{\mathbf{A}_q})$.Thus, 
\begin{align*}
||\mathcal{M}(|F|^{q/q_2})||_{L^{q_2,pq_2}_w(\Omega_T)}\leq C ||F||_{L^{q,p}_w(\Omega_T)}.
\end{align*} Applying \eqref{es-U'} to $s=q/q_2$ and $q=q_2,p_1=pq_2$, we have 
\begin{align*}
|||\nabla u|||_{L^{q,p}_w(\Omega_T)}&\leq ||\mathcal{M}(|\nabla u|^{q/q_2})||_{L^{q_2,pq_2}_w(\Omega_T)}\\&\leq C||\mathcal{M}(|F|^{q/q_2})||_{L^{q_2,pq_2}_w(\Omega_T)}
\\&\leq C ||F||_{L^{q,p}_w(\Omega_T)}.
\end{align*}
We get the result. The proof is complete.
\end{proof}                                                   

\end{document}